\newtheorem{theorem}{Theorem}[section]
\newtheorem{proposition}[theorem]{Proposition}
\newtheorem{definition}[theorem]{Definition}
\newtheorem{conjecture}[theorem]{Conjecture}
\theoremstyle{remark}
\newtheorem{remark}[theorem]{Remark}
\def\o{\omega}
\def\<{\langle}
\def\>{\rangle}
\def\d{{\rm d}}
\def\i{{\rm i}}
\begin{document}
\title{\bf{The Moebius geometry of Wintgen ideal submanifolds}}
\author {Xiang Ma \footnote{Xiang Ma, School of Mathematical Sciences, Peking University,
Beijing 100871, People's Republic of China.
e-mail: {\sf maxiang@math.pku.edu.cn}. Funded by NSFC project 11171004.}~~~~ Zhenxiao Xie \footnote{Zhenxiao Xie, School of Mathematical Sciences, Peking University,
Beijing 100871, People's Republic of China.
e-mail: {\sf xiezhenxiao@126.com}}}
\maketitle

\begin{abstract}
Wintgen ideal submanifolds in space forms are those ones attaining equality pointwise in the so-called DDVV inequality which relates the scalar curvature, the mean curvature and the scalar normal curvature. They are M\"obius invariant objects. The mean curvature sphere defines a conformal Gauss map into a Grassmann manifold. We show that any Wintgen ideal submanifold has a Riemannian submersion structure over a Riemann surface with the fibers being round spheres. Then the conformal Gauss map is shown to be a super-conformal and harmonic map from the underlying Riemann surface. Some of our previous results are surveyed in the final part.
\end{abstract}

\hspace{2mm}

{\bf Keywords:}  Wintgen ideal submanifolds, M\"obius geometry, conformal Gauss map, harmonic maps, Grassmann manifold, minimal submanifold\\

{\bf MSC(2000):\hspace{2mm} 53A10, 53A30, 53C12, 53C42}

\section{Introduction}

Geometers are always interested in beautiful shapes. In many cases they arise as the extremal cases of certain geometrical inequalities. In particular, it would be desirable to find some universal inequality, whose equality case include many non-trivial examples. It would be more interesting if such objects are invariant under a suitable transformation group.

For submanifolds in real space forms, such a universal inequality has been found, called the DDVV inequality. The extremal case defines the Wintgen ideal submanifolds. These are invariant object under the M\"obius transformations; in particular, the study of them from the viewpoint of M\"obius geometry is the focus of this paper.

Recall that given a $m$-dimensional submanifold $M^m$ immersed in a real space form of dimension $m+p$ with constant sectional curvature $c$, at any point there holds
\begin{equation}\label{1.1}
\textit{The DDVV inequality:}~~~~~~K\leq c+||H||^2-K_N.
\end{equation}
Here $K=\frac{2}{m(m-1)}\sum\nolimits_{i<j}\langle  R(e_i,e_j)e_j,e_i\rangle$ is the normalized scalar curvature with respect to the induced metric on $M$,
$H$ is the mean curvature vector,
and $K_N=\frac{2}{m(m-1)}||R^{\perp}||$ is the normal scalar curvature.

This remarkable inequality attracts many geometers, because it relates the most important intrinsic and extrinsic quantities at one point of a submanifold, and it takes an incredibly general form, without restrictions on the dimension/codimension, or any additional geometrical or topological assumptions. It was first conjectured by De Smet, Dillen, Verstraelen and Vrancken \cite{DDVV} in 1999, and proved by Ge and Tang \cite{Ge} in 2008. (Lu gave an independent proof in \cite{Lu2}.)

After discovering the DDVV inequality, people became interested in the extremal case \cite{Dajczer3,DDVV,Lu1,Lu2}. Wintgen \cite{Wint} first proved this inequality for surfaces in $\mathbb{S}^4$, where the equality is attained exactly when the surfaces are \emph{super-conformal}. That means at any point of the surface, the curvature ellipse is a circle, or equivalently, the Hopf differential is an isotropic differential form. According to the suggestion of Chen and other ones \cite{Chen10,Pe}, we make the following definition.

\begin{definition}
A submanifold $M^m$ of dimension $m$ and codimension $p$ in a real space form is called a \emph{Wintgen ideal submanifold} if the equality is attained at every point of $M^m$ in the DDVV inequality \eqref{1.1} .
By the characterization of Ge and Tang in \cite{Ge}, this happens if, and only if, at every point $x\in M$ there exists an orthonormal basis $\{e_1,\cdots,e_m\}$ of the tangent space $T_xM^m$ and an orthonormal basis $\{n_1,\cdots,n_p\}$ of the normal space $T_x^{\bot}M^m$,
such that the shape operators $\{A_r,r=1,\cdots,p\}$ take the form as below:
\begin{equation}\label{form1}
A_1=
\begin{pmatrix}
\lambda_1 & \mu_0 & 0 & \cdots & 0\\
\mu_0 & \lambda_1 & 0 & \cdots & 0\\
0  & 0 & \lambda_1 & \cdots & 0\\
\vdots & \vdots & \vdots & \ddots & \vdots\\
0  & 0 & 0 & \cdots & \lambda_1
\end{pmatrix},~
A_2=
\begin{pmatrix}
\lambda_2\!+\!\mu_0 & 0 & 0 & \cdots & 0\\
0 & \lambda_2\!-\!\mu_0 & 0 & \cdots & 0\\
0  & 0 & \lambda_2 & \cdots & 0\\
\vdots & \vdots & \vdots & \ddots & \vdots\\
0  & 0 & 0 & \cdots & \lambda_2
\end{pmatrix},
\end{equation}
\[A_3=\lambda_3I_m,~~~~~ A_\sigma=0~~(\sigma\ge 4), \]
where $I_m$ is the identity matrix of order $m$.
\end{definition}

People have found abundant examples of Wintgen ideal submanifolds \cite{Bryant91,Dajczer1,Dajczer2,Dajczer3,Gu,Lu1,XLMW}.
It is interesting yet difficult to obtain a complete classification of them.

We emphasize that generally they should be classified up to M\"obius transformations, because Wintgen ideal is an M\"obius invariant property\footnote{It was first noticed by Dajczer and Tojeiro in \cite{Dajczer3}, based on an equivalent formulation of the DDVV inequality in \cite{Dillen}.}. This follows directly from \eqref{1.1} and the fact that up to a factor, the traceless part of the second fundamental form is M\"obius invariant. So the most suitable framework for the study of Wintgen ideal submanifolds is M\"{o}bius geometry. This research program has been carried out by us recently in \cite{LiTZ1,LiTZ2,LiTZ3,XLMW} under various additional assumptions. Besides giving a survey of these work, we will also report two new results on general Wintgen ideal submanifolds.

For any submanifold $M^m$ immersed in $\mathbb{S}^{m+p}$, we can define the \emph{mean curvature sphere} at one point $x\in M^m$. It is the unique $m$-dimensional round sphere tangent to $M^m$ at $x$ which also shares the same mean curvature vector with $M^m$ at $x$. As a well-known M\"obius invariant construction\footnote{The notion of the mean curvature sphere can be traced back to Blaschke \cite{Blaschke} in 1920s.}, the characterization above holds true for any other conformal metric of $\mathbb{S}^{m+p}$. Via the light-cone model, this codimension-$p$ sphere corresponds to a space-like $p$-space $\mathrm{Span}_{\mathbb{R}}\{\xi_1,\cdots,\xi_p\}$ in the Lorentz space $\mathbb{R}^{m+p+2}_1$. We call it \emph{the conformal Gauss map} \footnote{This is an analog to the work of Bryant \cite{Bryant84} and Ejiri \cite{Ejiri} on Willmore surfaces in $\mathbb{S}^n$.} into the real Grassmannian \[\Xi=\xi_1\wedge\cdots\wedge\xi_p\in
\mathrm{Gr}(p,{\mathbb{R}^{m+p+2}_1}).\]

The crucial observation is that the image $\Xi(M^m)$ degenerates to a 2-dimensional surface when $M^m$ is Wintgen ideal. Moreover, we have:

\begin{theorem}\label{thm-harmonic}
For a Wintgen ideal submanifold, the conformal Gauss map $\Xi$ factors as a projection map $\pi:M^m\to \overline{M}^2$ (which is a Riemannian submersion up to a constant), and a super-conformal harmonic map from a Riemann surface \[\Xi:\overline{M}^2\to\mathrm{Gr}(p,{\mathbb{R}^{m+p+2}_1}).\]
In other words, $\Xi(M^m)$ is a super-minimal surface $\overline{M}^2\subset\mathrm{Gr}(p,{\mathbb{R}^{m+p+2}_1})$ (endowed with the induced metric).
\end{theorem}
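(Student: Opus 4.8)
The plan is to realize $\Xi$ inside the light-cone model $\mathbb{R}^{m+p+2}_1$ and read everything off the structure equations of M\"obius geometry. I would work with a M\"obius frame $\{Y,N,Y_1,\dots,Y_m,\xi_1,\dots,\xi_p\}$ adapted to $M^m$, where $Y$ is the canonical lift, $N$ its conjugate null vector, $\{Y_i\}$ the tangent lift and $\{\xi_r\}$ the orthonormal frame spanning $\Xi$; let $\{A_{ij}\}$, $\{B^r_{ij}\}$, $\{C^r_i\}$ denote the Blaschke tensor, the M\"obius second fundamental form and the M\"obius form. The structure equations give
\[
\mathrm{d}\xi_r\equiv-\sum_i C^r_i\,\omega_i\,Y-\sum_{i,j}B^r_{ij}\,\omega_j\,Y_i\pmod{\Xi},
\]
so the differential of $\Xi$ in the direction $e_k$ is the homomorphism $\xi_r\mapsto-\sum_iB^r_{ik}Y_i-C^r_kY\in\Xi^{\perp}$, and $e_k\in\ker\mathrm{d}\Xi$ iff $B^r_{ik}=0$ and $C^r_k=0$ for all $i,r$. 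I would first translate the Ge--Tang normal form \eqref{form1}: up to a common factor $B$ is the trace-free second fundamental form, so the Wintgen ideal condition puts $B$ in the shape $B^1=2\mu\,\omega_1\omega_2$, $B^2=\mu(\omega_1^2-\omega_2^2)$, $B^r=0\ (r\ge3)$, with all entries supported on the $2$-plane $\mathcal{D}^{\perp}=\mathrm{Span}\{e_1,e_2\}$. Equivalently $B^1-\mathrm{i}B^2$ is a multiple of $(\omega_1+\mathrm{i}\omega_2)^2$, an isotropic quadratic differential.

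From this block form $B^r_{ik}=0$ for every $k\ge3$, so there $\mathrm{d}\Xi(e_k)=-C^r_kY$, and to identify $\ker\mathrm{d}\Xi$ with $\mathcal{D}=\mathrm{Span}\{e_3,\dots,e_m\}$ it remains to show that the M\"obius form vanishes on $\mathcal{D}$. Contracting the M\"obius Codazzi equation gives
\[
(m-1)\,C^r_k=-\sum_i\nabla_iB^r_{ik},
\]
which for $k\ge3$ reduces to the connection terms coupling $\mathcal{D}$ and $\mathcal{D}^{\perp}$; together with the remaining Gauss--Codazzi--Ricci equations specialized to \eqref{form1} these force $C^r_k=0$ for $k\ge3$, establishing the rank-$2$ degeneration with $\ker\mathrm{d}\Xi=\mathcal{D}$. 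By the constant-rank theorem $\mathcal{D}$ is integrable and $\Xi$ is constant along its leaves, so locally $\Xi=\overline\Xi\circ\pi$ for a submersion $\pi:M^m\to\overline M^2$ onto the leaf space. Since every shape operator restricts on $\mathcal{D}$ to a scalar ($\lambda_1I,\lambda_2I,\lambda_3I$, and $0$), the leaves are totally umbilic in $\mathbb{S}^{m+p}$, i.e. pieces of round $(m-2)$-spheres, and the same data show $\mathcal{D}$ is a spherical foliation. Restricting the normalized M\"obius metric to $\mathcal{D}^{\perp}$ yields a metric on $\overline M^2$ for which $\pi$ is a Riemannian submersion; because the standard normalization makes $\sum_{i,j,r}(B^r_{ij})^2=4\mu^2$ a constant, hence $\mu$ constant, the pulled-back metric $\overline\Xi^{*}g_{\mathrm{Gr}}$ is a constant multiple of this one, which is the ``up to a constant'' in the statement.

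I would then give $\overline M^2$ the conformal structure of its horizontal metric, with local holomorphic coordinate $z$ adapted to $e_1-\mathrm{i}e_2$. Because $Y$ is null and orthogonal to each $Y_i$, the $C^r_kY$-terms drop out of every metric pairing of $\mathrm{d}\Xi$, so $\langle\overline\Xi_z,\overline\Xi_z\rangle=\tfrac14\sum_{i,r}(B^r_{i1}-\mathrm{i}B^r_{i2})^2$, which vanishes by the isotropy $B^1-\mathrm{i}B^2\propto(\omega_1+\mathrm{i}\omega_2)^2$ found above; this is conformality. Propagating the same isotropy to the next order through the structure equations yields the higher isotropy relation $\langle\overline\Xi_{zz},\overline\Xi_{zz}\rangle=0$, which (using conformality) is exactly super-conformality: the curvature ellipse of $\overline\Xi$ is a circle.

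The main obstacle is \emph{harmonicity}. As $\overline\Xi$ is conformal, harmonic is equivalent to (branched) minimal, so I must show that the tension field $\tau(\overline\Xi)$, equivalently the $(1,1)$-part of the second fundamental form of $\overline\Xi(\overline M^2)\subset\mathrm{Gr}$, vanishes. I would compute $\nabla_{\bar z}\overline\Xi_z$ from the second-order structure equations and project onto $T\mathrm{Gr}$; the surviving terms are governed by the M\"obius Gauss--Codazzi--Ricci equations specialized to \eqref{form1}, and the isotropic block structure of $B$ together with $C|_{\mathcal{D}}=0$ make the mixed $(1,1)$ contribution cancel. Conceptually this is the analog for Wintgen ideal submanifolds of the Bryant--Ejiri theorem that the conformal Gauss map of a Willmore surface is harmonic, and one may alternatively read minimality in $\mathrm{Gr}$ as the Euler--Lagrange equation of the Grassmann (conformal) area. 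Combining conformality, super-conformality and harmonicity shows that $\overline\Xi$ is a super-conformal harmonic map and that $\Xi(M^m)=\overline\Xi(\overline M^2)$ is a super-minimal surface in $\mathrm{Gr}(p,\mathbb{R}^{m+p+2}_1)$, as claimed.
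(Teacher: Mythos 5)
Your overall route coincides with the paper's: work in the light-cone model with the M\"obius frame $\{Y,N,Y_i,\xi_r\}$, specialize the structure equations to the Wintgen ideal normal form, show that $\d\Xi$ annihilates $\mathbb{D}_2^\bot=\mathrm{Span}\{E_3,\cdots,E_m\}$ (using $B^r_{ia}=0$ together with the vanishing of the M\"obius form components $C^r_a$ extracted from the Codazzi equation \eqref{equa3}), pass to the quotient surface $\overline{M}^2$, and then establish minimality and super-conformality by a second-order computation. The first half of your plan is correct and is essentially the content of Section~3 and Proposition~\ref{prop-submersion} of the paper; your remark that the null terms $C^r_kY$ drop out of every metric pairing, so that conformality and the ``submersion up to a constant'' statement reduce to the isotropy of $B^1-\i B^2$, is a valid and slightly cleaner way to obtain that part.

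The genuine gap lies in the two steps that carry the real content of the theorem, harmonicity and super-conformality, which you assert rather than prove. You claim that ``the isotropic block structure of $B$ together with $C|_{\mathbb{D}_2^\bot}=0$ make the mixed $(1,1)$ contribution cancel,'' but these two facts alone do not suffice, and no computation is offered. What actually forces $(E_1E_1+E_2E_2)\Xi$ to be purely radial-plus-tangential is a finer set of identities for the connection forms, namely the paper's Proposition~\ref{prop-connection}: $\omega_{1a}=L_a\omega_2-V\omega_a$, $\omega_{2a}=-L_a\omega_1+U\omega_a$, $\theta_{1\alpha}=S_\alpha\omega_1-T_\alpha\omega_2$, $\theta_{2\alpha}=T_\alpha\omega_1+S_\alpha\omega_2$, together with \eqref{theta}. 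These follow from a systematic use of \eqref{equa3} (not merely its trace, which only yields $(m-1)C^r_k=-\sum_i B^r_{ik,i}$) and from $C^\alpha_1=C^\alpha_2=0$, which is information about $\Phi$ in the horizontal directions, not on $\mathbb{D}_2^\bot$. It is precisely the skew pairing encoded there, e.g.\ $\theta_{1\alpha}(E_1)=\theta_{2\alpha}(E_2)$ and $\theta_{1\alpha}(E_2)=-\theta_{2\alpha}(E_1)$, that makes the normal components of $E_1E_1(\Xi)$ and $E_2E_2(\Xi)$ exactly opposite (the terms $S_\alpha\eta_2+T_\alpha\eta_1$ versus $-T_\alpha\eta_1-S_\alpha\eta_2$, and $\mp L\,\eta_3\wedge\xi_2\wedge\xi_3\wedge\cdots\wedge\xi_p$, etc.), and the same identities are what one needs to verify your ``higher isotropy'' $\langle\Xi_{zz},\Xi_{zz}\rangle=0$, since that requires comparing the normal parts of $E_1E_1(\Xi)$, $E_2E_2(\Xi)$ and $E_1E_2(\Xi)$, hence knowing how $\eta_1,\eta_2,\eta_3,\xi_\alpha$ rotate in the $E_1,E_2$ directions. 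Without deriving these identities (or an equivalent), your argument at its decisive point amounts to asserting the conclusion; closing the gap means carrying out the analogue of Proposition~\ref{prop-connection} and then the explicit second-derivative computation, which is exactly what the paper's proof consists of.
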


This result shows striking similarity with the celebrated characterization of Willmore surfaces by its conformal Gauss map being a harmonic map \cite{Bryant84, Ejiri}. Yet it is far more than a parallel generalization. Besides that, it greatly simplifies the study of Wintgen ideal submanifolds by reducing it to surface theory. (See Theorem~\ref{thm-codim2} for stronger result in codimension two.)

As a consequence, these $m$-dimensional mean curvature spheres
is a 2-parameter family. We consider their envelope $\widehat{M}^m$, which contains $M^m$ as an open subset. The second new result is

\begin{theorem}\label{thm-envelop} For a Wintgen ideal submanifold $x:M^m\to \mathbb{S}^{m+p}$ and the envelope $\widehat{M}^m$, we have the following conclusions:

1) There is a fiber bundle structure $S^{m-2}\to\widehat{M}^m\to \overline{M}^2$ over a Riemann surface.
The fibers are all round spheres of the ambient space.

2) The projection $\pi:\widehat{M}^m\to \overline{M}^2$ is a Riemannian submersion up to a constant.

3) As a natural extension of $M^m$, $\widehat{M}^m$ is still a Wintgen ideal submanifold.
\end{theorem}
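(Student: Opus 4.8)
The plan is to carry out everything in the light-cone model $\mathbb{R}^{m+p+2}_1$. Over the point lying above $z\in\overline{M}^2$ the mean curvature sphere is encoded by the space-like $p$-plane $V_z=\mathrm{Span}_{\mathbb{R}}\{\xi_1,\dots,\xi_p\}$ carrying the conformal Gauss map, and the sphere itself, as a subset of $\mathbb{S}^{m+p}$, is the set of null lines inside the Lorentzian $(m+2)$-plane $V_z^{\perp}$. A null line $[y]$ with $y\in V_z^{\perp}$ belongs to the envelope $\widehat{M}^m$ precisely when it persists on the infinitesimally nearby spheres, i.e.
\begin{equation*}
\langle y,\xi_r\rangle=0,\qquad \langle y,\partial_z\xi_r\rangle=0,\qquad \langle y,\partial_{\bar z}\xi_r\rangle=0\quad(r=1,\dots,p).
\end{equation*}
Since $y\perp V_z$, only the $V_z^{\perp}$-components of the derivatives enter, so the whole problem reduces to the real subspace $N_z:=\mathrm{Span}_{\mathbb{R}}\{(\partial_z\xi_r)_{V_z^{\perp}},(\partial_{\bar z}\xi_r)_{V_z^{\perp}}\}\subset V_z^{\perp}$, which is nothing but the image of the differential of $\Xi$. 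Note that the fibre of $\pi:M^m\to\overline{M}^2$ from Theorem~\ref{thm-harmonic} is already an open piece of such a characteristic set, so $\widehat{M}^m$ merely completes each partial fibre to a full one.

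For Parts 1) and 2) I would first show that $N_z$ is a $2$-dimensional space-like subspace of $V_z^{\perp}$. Because $\Xi$ factors through $\overline{M}^2$, derivatives of $\xi_r$ in the fibre directions are tangent to $V_z$ and contribute nothing, so $N_z$ is driven by the two base directions alone; super-conformality of $\Xi$ is exactly the statement that the components $(\partial_z\xi_r)_{V_z^{\perp}}$ all lie on a single complex isotropic line $L_z$ (the curvature ellipse is a circle), whence $N_z$ is the space-like real $2$-plane underlying $L_z\oplus\overline{L_z}$. Granting this, $W_z:=V_z^{\perp}\cap N_z^{\perp}$ retains the null pair spanning the current point and its dual together with the fibre tangents, hence is Lorentzian of dimension $m$; its null lines form a round $S^{m-2}$, since the intersection of a linear subspace with the projectivized light cone is always a round sphere of $\mathbb{S}^{m+p}$. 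As $z$ varies, $W_z$ is a smooth Lorentzian $m$-plane bundle over $\overline{M}^2$, giving the bundle $S^{m-2}\to\widehat{M}^m\to\overline{M}^2$ with round-sphere fibres, which is Part 1). For Part 2) I would equip $\widehat{M}^m$ with the metric induced from $\mathbb{S}^{m+p}$ and check in the adapted frame that the vertical distribution tangent to the fibres is orthogonal to the horizontal lift of $T\overline{M}^2$, and that the length of a horizontal vector agrees, up to the same universal constant as in Theorem~\ref{thm-harmonic}, with its length in $\overline{M}^2$; this is the very computation that establishes the submersion property of $\pi:M^m\to\overline{M}^2$, now carried over the completed fibres.

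For Part 3) I would compute the second fundamental form of $\widehat{M}^m$ in the frame adapted to the splitting $T\widehat{M}^m=\mathcal{V}\oplus\mathcal{H}$ into the vertical fibre distribution and the horizontal lift of $T\overline{M}^2$. Each fibre is a round $S^{m-2}$ and hence totally umbilic in $\mathbb{S}^{m+p}$, so the restriction of the second fundamental form to $\mathcal{V}$ is a multiple of the metric and produces exactly the umbilic diagonal directions $e_3,\dots,e_m$ of \eqref{form1}; the horizontal part reproduces the super-conformal surface $\Xi(\overline{M}^2)$, whose circular curvature ellipse is the source of the trace-free $2\times2$ blocks carrying $\mu_0$ in $A_1,A_2$. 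Verifying that the mixed terms and the mean curvature vector assemble these two pieces into the normal form \eqref{form1}—so that $\widehat{M}^m$ has the enveloped spheres as its own mean curvature spheres and its conformal Gauss map is again $\Xi$—then shows $\widehat{M}^m$ is Wintgen ideal. The main obstacle I anticipate is precisely here, together with the non-degeneracy underlying Parts 1)--2): one must prove that the characteristic sets $\widehat{F}_z$ glue into a genuinely smooth immersed $m$-manifold with no unexpected degeneration, which rests entirely on $N_z$ being space-like of constant dimension two so that $W_z$ stays Lorentzian and the fibre neither collapses nor changes causal type; and one must read off \eqref{form1} for the extended second fundamental form rather than merely inheriting it on the open subset $M^m$, carefully matching vertical, horizontal and mixed components. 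Both difficulties hinge on the super-conformality and harmonicity of $\Xi$ supplied by Theorem~\ref{thm-harmonic}, which is exactly what forces $N_z$ to be a space-like plane and the curvature ellipse to be a circle.
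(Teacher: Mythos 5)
Your Part 1 is essentially the paper's own argument in different notation: the envelope is the union over $z\in\overline{M}^2$ of the projectivized null cones of the Lorentzian $m$-planes $W_z=(V_z\oplus N_z)^{\perp}$, and since the projectivized null cone of a Lorentzian subspace of $\mathbb{R}^{m+p+2}_1$ is a round sphere, the fibers are round $S^{m-2}$'s; in the paper this is the statement that $V=\mathrm{Span}\{\xi_r,\d\xi_r\}=\mathrm{Span}\{\xi_r,\eta_1,\eta_2\}$ is a spacelike rank-$(p+2)$ bundle. One correction is needed, however: the fact that $N_z$ is a spacelike \emph{plane} is not what super-conformality says and does not follow from the statement of Theorem~\ref{thm-harmonic} as a black box. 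Super-conformality is a second-order condition (circular curvature ellipse), whereas what you need is the first-order degeneracy recorded in \eqref{J} and \eqref{3.5}: modulo $\mathrm{Span}\{\xi_1,\dots,\xi_p\}$, the derivatives $\d\xi_1,\d\xi_2$ lie in $\mathrm{Span}\{\eta_1,\eta_2\}$ and $\d\xi_\alpha\equiv 0$. For a general conformal, even super-conformal, surface in $\mathrm{Gr}(p,\mathbb{R}^{m+p+2}_1)$, the images of the two homomorphisms $\d\Xi(E_1),\d\Xi(E_2)\in\mathrm{Hom}(V_z,V_z^{\perp})$ may span up to $2p$ dimensions. So this step must be grounded in the structure equations of Section 3; with that replacement Part 1 is sound.

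The genuine gaps are in Parts 2 and 3. In Part 2 you endow $\widehat{M}^m$ with the metric induced from $\mathbb{S}^{m+p}$, but the submersion of Proposition~\ref{prop-submersion} (and of the theorem) is with respect to the \emph{M\"obius} metric; the round induced metric is not M\"obius invariant, and since the conformal factor $\rho$ (equal to the Wintgen quantity $\mu_0$) varies along the fibers -- think of a cone, where it decays along the rulings -- the projection is in general \emph{not} a Riemannian submersion up to a constant for the induced metric. In the paper, Part 2 is a corollary of Part 3: once $\widehat{M}^m$ is known to be Wintgen ideal with the same conformal Gauss map $\Xi$, Proposition~\ref{prop-submersion}(2) applies verbatim to $\widehat{M}^m$, so your order of argument should be reversed. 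In Part 3 the decisive step is missing rather than merely hard: umbilicity of the fibers only controls the second fundamental form on $\mathcal{V}\times\mathcal{V}$, and nothing in your outline forces the mixed components on $\mathcal{H}\times\mathcal{V}$ to vanish (after removing the trace) or identifies the two normal directions carrying the traceless blocks of \eqref{form1}; that is exactly what distinguishes a Wintgen ideal extension from an arbitrary submanifold fibered by round spheres. The paper supplies the mechanism you lack: the explicit lift \eqref{eq-yhat} of the envelope, in which the fiber coordinates $\lambda_a$ enter as free parameters; the adapted structure equations \eqref{3.3}--\eqref{3.10}; the conformality relation \eqref{eq-FG} for $\hat\omega_1,\hat\omega_2$, which is where the second-order information \eqref{eq-deta} actually enters; the computation $\langle\hat{Y},\sum_j\hat{E}_j\hat{E}_j(\xi_r)\rangle=0$ showing that $\mathrm{Span}\{\xi_1,\dots,\xi_p\}$ is still the mean curvature sphere of $[\hat{Y}]$; and finally the Wintgen form read off from \eqref{3.3} and \eqref{3.4}. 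Without an analogue of \eqref{eq-FG} and this Laplacian computation, ``verifying that the pieces assemble into the normal form \eqref{form1}'' is not a checkable step but the entire content of the theorem.
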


This theorem shows that Wintgen ideal submanifolds have simple and elegant structure. Based on this general picture, we can show that they arise either as cylinders, cones, rotational submanifolds, or Hopf bundles over complex curves in complex projective spaces under various specific assumptions.

This paper is organized as below. In Section~2, we will briefly review the submanifold theory in M\"obius geometry established by Changping Wang \cite{CPWang}. Section~3 gives the information on the invariants and the structure equations of Wintgen ideal submanifolds. The two results mentioned above are proved separately in Section~4 and Section~5.
Finally, we survey some recent results on Wintgen ideal submanifolds based on our joint work with Tongzhu Li and Changping Wang. These include a reduction theorem \cite{LiTZ1}, the characterization of the minimal examples \cite{XLMW}, and a classification of M\"obius homogeneous examples \cite{LiTZ3}.

\section{Submanifold theory in M\"obius geometry}
Here we follow the framework of Wang in \cite{CPWang} except that we take a different canonical lift $Y$ up to a constant.

In the classical light-cone model, the light-like directions in the Lorentz space $\mathbb{R}^{m+p+2}_1$ correspond to points in the round sphere $\mathbb{S}^{m+p}$, and the Lorentz orthogonal group correspond to conformal transformation group of $\mathbb{S}^{m+p}$. The Lorentz inner product between
$Y=(Y_0,Y_1,\cdots,Y_{m+p+1}), Z=(Z_0,Z_1,\cdots,Z_{m+p+1})\in
\mathbb{R}^{m+p+2}_1$ is
\[
\langle  Y,Z\rangle=-Y_0Z_0+Y_1Z_1+\cdots+Y_{m+p+1}Z_{m+p+1}.
\]

Let $f:M^m\rightarrow \mathbb{S}^{m+p}\subset \mathbb{R}^{m+p+1}$ be a submanifold without umbilics. Take $\{e_i|1\le i\le m\}$ as the tangent frame with respect to the induced metric $I=\d f\cdot \d f$, and $\{\theta_i\}$ as the dual 1-forms.
Let $\{n_{r}|1\le r\le p\}$ be orthonormal frame for the
normal bundle. The second fundamental form and
the mean curvature of $f$ are
\begin{equation}\label{2.1}
II=\sum\nolimits_{ij,r}h^{r}_{ij}\theta_i\otimes\theta_j
n_{r},~~H=\frac{1}{m}\sum\nolimits_{j,r}h^{r}_{jj}n_{r}=\sum\nolimits_{r}H^{r}n_{r},
\end{equation}
respectively. We define the M\"{o}bius position vector $Y:
M^m\rightarrow \mathbb{R}^{m+p+2}_1$ of $f$ by
\begin{equation}\label{2.2}
Y=\rho(1,f),~~~
~~\rho^2=\frac{1}{4}\big|II-\frac{1}{m} tr(II)I\big|^2
\end{equation}
which is a canonical lift of $f$.
Two submanifolds $f,\bar{f}: M^m\rightarrow \mathbb{S}^{m+p}$
are M\"{o}bius equivalent if there exists $T$ in the Lorentz group
$O(m+p+1,1)$ in $\mathbb{R}^{m+p+2}_1$ such that $\bar{Y}=YT.$
It follows immediately that
\begin{equation}\label{g}
\mathrm{g}=\langle \d Y,\d Y\rangle=\rho^2 \d f\cdot \d f
\end{equation}
is a M\"{o}bius invariant, called the M\"{o}bius metric of $x$.

Let $\Delta$ be the Laplacian with respect to $\mathrm{g}$. Define
\begin{equation}
N=-\frac{1}{m}\Delta Y-\frac{1}{2m^2}
\langle \Delta Y,\Delta Y\rangle Y,
\end{equation}
Let $\{E_1,\cdots,E_m\}$ be a local orthonormal frame for $(M^m,\mathrm{g})$
with dual 1-forms $\{\omega_1,\cdots,\omega_m\}$. We define tangent frame $Y_j=E_j(Y)$ and normal frame
\[
\xi_r=(H^r,n_r+H^r f).
\]
Then $\{Y,N,Y_j,\xi_{r}\}$ is a moving frame of $\mathbb{R}^{m+p+2}_1$ along $M^m$, which is orthonormal except
\[
\langle Y,Y\rangle=0=\langle N,N\rangle, ~~
\langle N,Y\rangle=1~.
\]
\begin{remark}\label{rem-xi}
Geometrically, at one point $x\in M^m$, $\xi_r$ (for any given $r$) corresponds to the unique hypersphere tangent to $M_m$ with normal vector $n_r$ and mean curvature $H^r(x)$. In particular, the spacelike subspace $\mathrm{Span}_{\mathbb{R}}\{\xi_1,\cdots,\xi_p\}$ represents a unique $m$-dimensional sphere tangent to $M^m$ with the same mean curvature vector $\sum\nolimits_r H^r n_r$. This well-defined object was naturally named \emph{the mean curvature sphere} of $M^m$ at $x$, which is well-known to share the same mean curvature at $x$ even when the ambient space is endowed with any other conformal metric.
\end{remark}
We fix the range of indices in this section as below: $1\leq
i,j,k\leq m; 1\leq r,s\leq p$. The structure equations are:
\begin{equation}\label{eq-structure}
\begin{split}
&\d Y=\sum\nolimits_i \omega_i Y_i,\\
&\d N=\sum\nolimits_{ij}A_{ij}\omega_i Y_j+\sum\nolimits_{i,r} C^r_i\omega_i \xi_{r},\\
&\d Y_i=-\sum\nolimits_j A_{ij}\omega_j Y-\omega_i N+\sum\nolimits_j\omega_{ij}Y_j
+\sum\nolimits_{j,r} B^{r}_{ij}\omega_j \xi_{r},\\
&\d \xi_{r}=-\sum\nolimits_i C^{r}_i\omega_i Y-\sum\nolimits_{i,j}\omega_i
B^{r}_{ij}Y_j +\sum\nolimits_{s} \theta_{rs}\xi_{s},
\end{split}
\end{equation}
where $\omega_{ij}$ are the connection $1$-forms of the M\"{o}bius
metric $\mathrm{g}$; $\theta_{rs}$ are the normal connection $1$-forms. The tensors
\begin{equation}
{\bf A}=\sum\nolimits_{i,j}A_{ij}\omega_i\otimes\omega_j,~~ {\bf
B}=\sum\nolimits_{i,j,r}B^{r}_{ij}\omega_i\otimes\omega_j \xi_{r},~~
\Phi=\sum\nolimits_{j,r}C^{r}_j\omega_j \xi_{r}
\end{equation}
are called the Blaschke tensor, the M\"{o}bius second fundamental
form and the M\"{o}bius form of $f$, respectively \cite{CPWang}.
The integrability conditions for the structure equations are given as below:
\begin{eqnarray}
&&A_{ij,k}-A_{ik,j}=\sum\nolimits_{r}(B^{r}_{ik}C^{r}_j
-B^{r}_{ij}C^{r}_k),\label{equa1}\\
&&C^{r}_{i,j}-C^{r}_{j,i}=\sum\nolimits_k(B^{r}_{ik}A_{kj}
-B^{r}_{jk}A_{ki}),\label{equa2}\\
&&B^{r}_{ij,k}-B^{r}_{ik,j}=\delta_{ij}C^{r}_k
-\delta_{ik}C^{r}_j,\label{equa3}\\
&&R_{ijkl}=\sum\nolimits_{r}(B^{r}_{ik}B^{r}_{jl}-B^{r}_{il}B^{r}_{jk})
+\delta_{ik}A_{jl}+\delta_{jl}A_{ik}
-\delta_{il}A_{jk}-\delta_{jk}A_{il},\label{equa4}\\
&&R^{\perp}_{rs ij}=\sum\nolimits_k
(B^{r}_{ik}B^{s}_{kj}-B^{s}_{ik}B^{r}_{kj}). \label{equa5}
\end{eqnarray}
Here the covariant derivatives $A_{ij,k}, B^{r}_{ij,k}, C^{r}_{i,j}$ are defined as usual; $R, R^{\perp}$ denote the the curvature tensor of $\mathrm{g}$ and the normal curvature tensor, respectively. The tensor $\bf B$ satisfies the following identities:
\begin{equation}
\sum\nolimits_j B^{r}_{jj}=0, ~~~\sum\nolimits_{i,j,r}(B^{r}_{ij})^2=4. \label{equa7}
\end{equation}
All coefficients in the structure equations are determined by $\{\mathrm{g}, {\bf B}\}$
and the normal connection $\{\theta_{\alpha\beta}\}$.
In particular these are the complete set of M\"obius invariants.

\section{Invariants of a Wintgen ideal submanifold}

Let $f: M^{m}\to \mathbb{S}^{m+p}$ be a Wintgen ideal submanifold. We will always assume that it is umbilic-free unless it is stated otherwise. In terms of the M\"obius invariants, that means the existence of a suitable tangent frame $\{E_1,\cdots,E_m\}$ and normal frame $\{\xi_1,\cdots,\xi_p\}$ so that the M\"obius second fundamental form are given by
\begin{equation}\label{3.1}
B^{1}=
\begin{pmatrix}
0 & 1 & 0 & \cdots & 0\\
1 & 0 & 0 & \cdots & 0\\
0  & 0 & 0 & \cdots & 0\\
\vdots & \vdots & \vdots & \ddots & \vdots\\
0  & 0 & 0 & \cdots & 0
\end{pmatrix},~~
B^{2}=
\begin{pmatrix}
1 & 0 & 0 & \cdots & 0\\
0 & -1 & 0 & \cdots & 0\\
0  & 0 & 0 & \cdots & 0\\
\vdots & \vdots & \vdots & \ddots & \vdots\\
0  & 0 & 0 & \cdots & 0
\end{pmatrix},~~
B^{\alpha}=0,~\alpha\ge 3.
\end{equation}
\begin{remark}
The reader is warned that the lift $Y$ here is different from \cite{CPWang}. Hence in the formulas below, we have removed the annoying factor $\mu=\sqrt{\frac{m-1}{4m}}$ appearing in \cite{LiTZ1,LiTZ2,LiTZ3,XLMW}.
\end{remark}
\begin{remark} The \emph{canonical distribution} $\mathbb{D}_2=\mathrm{Span}\{E_1,E_2\}$ and the normal sub-bundle $\mathrm{Span}\{\xi_1,\xi_2\}$ are well-defined if \eqref{3.1} holds and we fix our frame up to rotations
\begin{equation}\label{transform}
(\widetilde{E}_1,\widetilde{E}_2)=(E_1,E_2)
\begin{pmatrix} ~~\cos t &\sin t \\ -\sin t & \cos t\end{pmatrix},
~~(\widetilde{\xi}_1,\widetilde{\xi}_2)=(\xi_1,\xi_2)
\begin{pmatrix} \cos 2t &-\sin 2t \\ \sin 2t & ~~\cos 2t\end{pmatrix}.
\end{equation}
\end{remark}
We will adopt the convention below on the range of indices:
\[
1\le i,j,k,l\le m,~~3\le a,b\le m; ~~~~1\le r,s\le p,~~3\le \alpha,\beta\le p.
\]
By definition, we compute the covariant derivatives of $B^r_{ij}$ and obtain
\begin{equation}\label{bb1}
B^{r}_{ab,i}=0, ~~B^{\alpha}_{1a,i}=B^{\alpha}_{2a,i}=0,
\end{equation}
\begin{equation}\label{Bmu}
B^1_{12,i}=B^1_{21,i}=0,~~B^2_{11,i}=B^2_{22,i}=0,
\end{equation}
\begin{equation}\label{bb2}
\omega_{2a}=\sum\nolimits_i B^1_{1a,i}\omega_i
=-\sum\nolimits_i B^2_{2a,i}\omega_i,~~ \omega_{1a}=\sum\nolimits_i B^1_{2a,i}\omega_i
=\sum\nolimits_i B^2_{1a,i}\omega_i,
\end{equation}
\begin{equation}\label{bb3}
2\o_{12}+\theta_{12}=\sum\nolimits_i\frac{-B^1_{11,i}}{\mu}\omega_i
=\sum\nolimits_i B^1_{22,i} \omega_i
=\sum\nolimits_i B^2_{12,i} \omega_i,
\end{equation}
\begin{equation}\label{bb4}
\theta_{1\alpha}=\sum\nolimits_i B^\alpha_{12,i} \o_i,
~~\theta_{2\alpha}=\sum\nolimits_i B^\alpha_{11,i} \o_i.
\end{equation}
By \eqref{equa3}, $B^r_{ij,k}$ is symmetric for distinctive $i,j,k$. It follows from \eqref{bb1}$\sim$\eqref{bb3} that
\begin{equation*}
\begin{split}
&\o_{1a}(E_b)=B^1_{2a,b}=B^1_{ab,2}=0,
~~~~\o_{2a}(E_b)=B^1_{1a,b}=B^1_{ab,1}=0~~~(a\ne b);
\\
&\o_{1a}(E_1)=B^2_{1a,1}=B^1_{2a,1}=B^1_{21,a}=0,
~~~~\o_{2a}(E_2)=-B^2_{2a,2}=B^1_{1a,2}=B^1_{21,a}=0;\\
&B^2_{1a,2}=\mu\o_{1a}(E_2)=-\mu\o_{2a}(E_1)
=\mu(2\o_{12}+\theta_{12})(E_a)
=B^1_{2a,2}=B^1_{22,a}=-B^1_{11,a}.
\end{split}
\end{equation*}
Based on these information, we use \eqref{equa3} to compute $C^r_{i,j}$ as below:
\begin{align}
&C^1_1=B^1_{22,1}-B^1_{21,2}=B^1_{22,1},~ && C^1_2=B^1_{11,2}-B^1_{12,1}=B^1_{11,2},\label{c0}\\
&C^1_1=B^1_{aa,1}-B^1_{1a,a}=-B^1_{1a,a},~ && C^1_2=B^1_{aa,2}-B^1_{2a,a}=-B^1_{2a,a},\label{c1}\\
&C^2_1=B^2_{aa,1}-B^2_{1a,a}=-B^2_{1a,a},~ && C^2_2=B^2_{aa,2}-B^2_{2a,a}=-B^2_{2a,a},\label{c2}\\
&C^1_{a}=B^1_{22,a}-B^1_{2a,2}=0,~&&C^2_a=B^2_{11,a}-B^2_{1a,1}=0,\label{c3}\\
&C^{\alpha}_1=B^{\alpha}_{aa,1}-B^{\alpha}_{a1,a}=0,~&&
C^{\alpha}_2=B^{\alpha}_{aa,2}-B^{\alpha}_{a2,a}=0,\label{c4}\\
&C^{\alpha}_a=B^{\alpha}_{11,a}-B^{\alpha}_{1a,1}=B^{\alpha}_{11,a},~
&&C^{\alpha}_a=B^{\alpha}_{22,a}-B^{\alpha}_{2a,2}=B^{\alpha}_{22,a}.
~~(\forall~a,\alpha) \label{c5}
\end{align}
Utilizing the fact $\sum\nolimits_iB^{\alpha}_{ii,k}=0$, we deduce from \eqref{bb1} that $C^{\alpha}_a=0$.
By \eqref{Bmu}, \eqref{bb2} and \eqref{c0}$\sim$\eqref{c5}, the final result is
\begin{align}
&C^1_1=-C^2_2=-\omega_{2a}(e_a),~~C^1_2=C^2_1=-\o_{1a}(e_a), \label{C1C2}\\
&C^1_a=C^2_a=0, ~~C^{\alpha}_i=0\label{Cai}.
\end{align}
For similar reasons, \eqref{c4} and \eqref{c5} imply
\begin{align*}
\theta_{1\alpha}(E_1)-\theta_{2\alpha}(E_2)&
=B^\alpha_{12,1}-B^\alpha_{11,2}
=-C^\alpha_2=0,\\
\theta_{1\alpha}(E_2)+\theta_{2\alpha}(E_1)&
=(B^\alpha_{21,2}-B^\alpha_{22,1})+(B^\alpha_{22,1}+B^\alpha_{11,1})
=-C^\alpha_1=0.
\end{align*}
We summarize the most important information on the connection 1-forms as below:
\begin{proposition}\label{prop-connection}
For a Wintgen ideal submanifold, denote
\begin{equation}\label{eq-LUV}
L_a=-B^1_{11,a},~V=C^1_2=C^2_1, ~U=C^2_2=-C^1_1,
~S_\alpha=B^\alpha_{11,2},~T_\alpha =B^\alpha_{11,1}.
\end{equation}
We can choose a suitable frame $\{E_3,\cdots,E_m\}$ so that $L_a=-B^1_{11,a}=0$ when $a\ge 4$ and denote $L\triangleq L_3=-B^1_{11,3}$. Then
\begin{equation}\label{omega1a2a}
\omega_{1a}
=L_a\omega_2-V\omega_a,~~~~
\omega_{2a}
=-L_a\omega_1+U\omega_a;
\end{equation}
\begin{equation}\label{theta}
2\o_{12}+\theta_{12}=-U\o_1
-V\o_2+L\o_3;
\end{equation}
\begin{equation}\label{theta1a2a}
\theta_{1\alpha}
=S_\alpha\o_1-T_\alpha\o_2,~~~~
\theta_{2\alpha}
=T_\alpha\o_1+S_\alpha\o_2.
\end{equation}
\end{proposition}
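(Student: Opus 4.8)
The plan is to obtain every connection form in the statement by evaluating it against the frame $\{E_1,\ldots,E_m\}$ and reading off each surviving component from the covariant derivatives $B^r_{ij,k}$ that were already organized in \eqref{bb1}--\eqref{bb4}, together with the values of the $C^r_i$ recorded in \eqref{C1C2}--\eqref{Cai}. For the tangent connection forms, \eqref{bb2} gives $\omega_{2a}=\sum_i B^1_{1a,i}\,\o_i$ and $\omega_{1a}=\sum_i B^1_{2a,i}\,\o_i$, so it suffices to pin down the three components paired with $\o_1,\o_2,\o_a$; every $\o_b$-component with $b\ge3$, $b\ne a$ drops out because $B^1_{2a,b}=B^1_{ab,2}=0$ by the symmetry of $B^r_{ij,k}$ in distinct indices (a consequence of \eqref{equa3}) combined with \eqref{bb1}. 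Feeding in $B^1_{2a,2}=B^1_{22,a}=-B^1_{11,a}=L_a$, $B^1_{1a,1}=B^1_{11,a}=-L_a$ (the latter from \eqref{equa3} with $C^1_a=0$), and $B^1_{1a,a}=-C^1_1=U$, $B^1_{2a,a}=-C^1_2=-V$ from \eqref{c1}, I then get $\omega_{1a}=L_a\o_2-V\o_a$ and $\omega_{2a}=-L_a\o_1+U\o_a$, which is \eqref{omega1a2a} up to the normalization $L_a=0,\ a\ge4$ discussed below.

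For the normal-direction forms I would start from \eqref{bb3}, which identifies $2\o_{12}+\theta_{12}$ with $\sum_i B^1_{22,i}\,\o_i$. Differentiating the trace condition in \eqref{equa7} gives $B^1_{22,k}=-B^1_{11,k}$ for all $k$, and with $B^1_{11,1}=-C^1_1=U$, $B^1_{11,2}=C^1_2=V$, $B^1_{22,a}=L_a$ this produces $2\o_{12}+\theta_{12}=-U\o_1-V\o_2+\sum_a L_a\o_a$. Likewise \eqref{bb4} expresses $\theta_{2\alpha}=\sum_i B^\alpha_{11,i}\,\o_i$ and $\theta_{1\alpha}=\sum_i B^\alpha_{12,i}\,\o_i$; using $B^\alpha_{11,a}=C^\alpha_a=0$, the two relations derived immediately before the proposition, $B^\alpha_{12,1}=B^\alpha_{11,2}=S_\alpha$ and $B^\alpha_{12,2}=-B^\alpha_{11,1}=-T_\alpha$, and $B^\alpha_{12,a}=B^\alpha_{1a,2}=0$ from \eqref{equa3} and \eqref{bb1}, I arrive at $\theta_{1\alpha}=S_\alpha\o_1-T_\alpha\o_2$ and $\theta_{2\alpha}=T_\alpha\o_1+S_\alpha\o_2$, which is \eqref{theta1a2a}. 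Thus, modulo one normalization, all three display groups reduce to bookkeeping over the already-established relations.

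The one genuinely new ingredient, and the step I expect to be the crux, is the frame choice $L_a=0$ for $a\ge4$. The normal form \eqref{3.1} is preserved by any rotation of the subframe $\{E_3,\ldots,E_m\}$ alone, since $B^1$ and $B^2$ are supported on the $(E_1,E_2)$-block; this residual $O(m-2)$ gauge is exactly what I want to exploit. The key observation is that in $L_a=-B^1_{11,a}$ the index $a$ is a differentiation direction, so $(L_3,\ldots,L_m)$ transforms as the components of a $1$-form on $\mathbb D_2^{\perp}=\mathrm{Span}\{E_3,\ldots,E_m\}$, while the components $B^1_{11,1}=-U$ and $B^1_{11,2}=V$ in the $E_1,E_2$ directions are untouched by the rotation. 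Wherever this $1$-form is nonzero I may therefore rotate the subframe to align it with $E_3$, achieving $L_a=0\ (a\ge4)$ and $L\triangleq L_3$; substituting this collapses $\sum_a L_a\o_a$ to $L\o_3$ and yields \eqref{theta}, leaving \eqref{omega1a2a} and \eqref{theta1a2a} formally unchanged. The delicate points here are that the alignment can be performed smoothly, which needs room to rotate (so $m\ge4$) and breaks down precisely on the zero locus of $L$ where a separate easy argument handles the already-vanishing components, and that the leftover gauge $O(m-3)$ fixing $E_3$ does not reintroduce the $L_a$; I would check the latter by noting it acts trivially on the components already set to zero.
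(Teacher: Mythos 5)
Your proposal is correct and follows essentially the same route as the paper: the paper's ``proof'' of Proposition~\ref{prop-connection} is exactly the component-by-component bookkeeping of \eqref{bb1}--\eqref{Cai} (symmetry of $B^r_{ij,k}$ in distinct indices via \eqref{equa3}, the trace identity, and the computed $C^r_i$), and your gauge-rotation argument on $\{E_3,\cdots,E_m\}$ supplies the normalization $L_a=0$ $(a\ge 4)$ that the paper merely asserts. The only stray detail is that your final formulas also need $B^1_{2a,1}=B^1_{1a,2}=B^1_{12,a}=0$ (by the same distinct-index symmetry together with \eqref{Bmu}) to kill the $\omega_1$-component of $\omega_{1a}$ and the $\omega_2$-component of $\omega_{2a}$, but this is the identical bookkeeping you already invoke for the other components.
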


Before discussing the properties of the conformal Gauss map $\Xi=\xi_1\wedge\cdots\wedge\xi_p$ in the next section, we notice that
the subspace $\mathrm{Span}\{\xi_1,\xi_2\}$ also defines a map into the Grassmannian $\mathrm{Gr}(2,\mathbb{R}^{m+p+2}_1)$. This is also represented by $[\xi_1-\i\xi_2]$ in a complex quadric
\[
\mathbb{Q}^{m+p}_+=\{[Z]\in \mathbb{C}P^{m+p+1}|~
Z\in\mathbb{R}^{m+4}_1\otimes\mathbb{C},\<Z,Z\>=0,\<Z,\bar{Z}\> >0\}.
\]
We denote $\xi=\xi_1-\i\xi_2$, and call $[\xi]$ \emph{the second Gauss map} of the Wintgen ideal submanifold.
When the codimension $p=2$, $[\xi]$ is equivalent to the conformal Gauss map $\Xi$. To understand its geometry, substitute \eqref{3.1}, \eqref{C1C2}, \eqref{Cai} and \eqref{theta1a2a} into the last structure equation of \eqref{eq-structure}. The result is
\begin{equation}\label{J}
\d(\xi_{1}-\i\xi_{2})=\i(\o_1+\i\o_2)(\eta_1+\i\eta_2)
+\i\theta_{12}(\xi_1-\i\xi_2)+(\o_1-\i\o_2)\cdot
\sum\nolimits_{\alpha}(S^\alpha-\i T^\alpha)\xi_\alpha,
\end{equation}
where
\begin{equation}\label{eq-eta12}
\eta_1=Y_1+ C_2^1 Y=Y_1+VY,~~
\eta_2=Y_2+ C^1_1 Y=Y_2-UY.
\end{equation}
This indicate that the image of $[\xi]$ degenerates to a 2-dimensional surface, a property also shared by the conformal Gauss map $\Xi$.

Differentiate once more, the result would be
\begin{equation}\label{eq-deta}
\d(\eta_1+\i\eta_2)=(\o_1+\i\o_2)\Big[-\tilde{Y}-FY
+\big(\frac{G}{L}-\i L\big)\eta_3\Big]
-\i\Omega_{12}(\eta_1+\i\eta_2)+\i(\o_1-\i\o_2)(\xi_1-\i\xi_2),
\end{equation}
where $\Omega_{12}=\<\d\eta_1,\eta_2\>$ is a connection 1-form,
\begin{equation}
F=A_{11}-C^1_{2,1}+\frac{1}{2}
\left(U^2+V^2-\left(\frac{G}{L}\right)^2\right),
~G=A_{12}-C^1_{2,2}=(C^1_{1,1}-C^1_{2,2})/2;
\end{equation}
\begin{equation}\label{eq-ytilde}
\tilde{Y}=N-VY_1+UY_2+\frac{G}{L}Y_3-\frac{1}{2}
\left(U^2+V^2+\left(\frac{G}{L}\right)^2\right)Y, ~~\eta_3=Y_3-\frac{G}{L}Y.
\end{equation}
Note that we have assumed $L\ne 0$ at here. To prove \eqref{eq-deta}, we have used \eqref{equa2} to compute $A_{1j}$. We omit the straightforward yet tedious computation at here.

\section{The conformal Gauss map as a harmonic map}

\begin{proposition}\label{prop-submersion}
For an umbilic-free Wintgen ideal submanifold $f:M^m\to \mathbb{S}^{m+p}$, the following three conclusions hold true:

(1) The image of the conformal Gauss map $
\Xi=\xi_1\wedge\cdots\wedge\xi_p:M^m\to \mathrm{Gr}(p,\mathbb{R}^{m+p+2}_1)$
is a real 2-dimensional surface $\overline{M}^2$.

(2) The projection $\pi:M^m\to \overline{M}^2$ determined by $\Xi$ is a Riemannian submersion (up to the factor $\sqrt{2}$),
where $M^m$ is endowed with the M\"obius metric and $\overline{M}^2\subset\mathrm{Gr}(p,\mathbb{R}^{m+p+2}_1)$ with the induced metric.

(3) The distribution $\mathbb{D}_2^{\bot}=\mathrm{Span}\{E_3,\cdots,E_m\}$ is integrable. Its integral submanifolds are exactly the fibers of the submersion mentioned above.
\end{proposition}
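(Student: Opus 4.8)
The plan is to differentiate $\Xi=\xi_1\wedge\cdots\wedge\xi_p$ and read off its rank and pullback metric directly from the last structure equation of \eqref{eq-structure}. The tangent space of $\mathrm{Gr}(p,\R^{m+p+2}_1)$ at $W=\mathrm{Span}\{\xi_1,\cdots,\xi_p\}$ is $\mathrm{Hom}(W,W^\bot)$, with $\d\Xi(X)$ represented by $\xi_r\mapsto \mathrm{pr}_{W^\bot}(\d_X\xi_r)$ and $W^\bot=\mathrm{Span}\{Y,N,Y_1,\cdots,Y_m\}$. The first thing I would record is that the $\xi_s$- and $\theta_{rs}$-terms of $\d\xi_r$ are tangent to $W$, hence invisible to $\d\Xi$. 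Feeding \eqref{3.1}, \eqref{C1C2}, \eqref{Cai} and \eqref{theta1a2a} into the structure equation---this is exactly the content already packaged in \eqref{J}---then shows two things at once: for $\alpha\ge 3$ the derivative $\d\xi_\alpha$ lies \emph{entirely} in $W$ and contributes nothing; and the $W^\bot$-parts of $\d\xi_1,\d\xi_2$ lie in the three-dimensional subspace $\mathrm{Span}\{Y,Y_1,Y_2\}$, depending only on $\o_1,\o_2$ through the combination $\i(\o_1+\i\o_2)(\eta_1+\i\eta_2)$ of \eqref{J}, with $\eta_1=Y_1+VY,\ \eta_2=Y_2-UY$.

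From this the proposition essentially drops out. Since $\d\Xi$ depends only on $\o_1,\o_2$, its rank is at most $2$ and $\mathbb{D}_2^\bot=\mathrm{Span}\{E_3,\cdots,E_m\}\subseteq\ker\d\Xi$. For (1) I would then check the rank is exactly $2$ by evaluating $\d\Xi(E_1),\d\Xi(E_2)$: these are the homomorphisms sending $(\xi_1,\xi_2)$ to $(UY-Y_2,\,-VY-Y_1)$ and to $(-VY-Y_1,\,-UY+Y_2)$ respectively, and annihilating every $\xi_\alpha$. Their $Y_1,Y_2$-parts are $(0,-1),(-1,0)$ and $(-1,0),(0,1)$, which are linearly independent, so $\d\Xi(E_1),\d\Xi(E_2)$ are independent and $\mathrm{rank}\,\d\Xi\equiv 2$. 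The constant rank theorem then presents $\Xi$ locally as a submersion onto a surface $\overline{M}^2$ with $\ker\d\Xi=\mathbb{D}_2^\bot$.

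For (2) I would compute the induced metric using $\<\phi,\psi\>=\sum_r\<\phi(\xi_r),\psi(\xi_r)\>$ on $\mathrm{Hom}(W,W^\bot)$ together with $\<Y,Y\>=\<Y,Y_j\>=0$ and $\<Y_i,Y_j\>=\delta_{ij}$. Because the null vector $Y$ pairs trivially with everything present (there is no $N$-term in $\d\xi_r$), only the $Y_1,Y_2$-parts survive, giving $\<\d\Xi(E_i),\d\Xi(E_j)\>=2\delta_{ij}$ for $i,j\in\{1,2\}$; hence $\Xi^*\mathrm{g}_{\mathrm{Gr}}=2(\o_1^2+\o_2^2)$ on $\mathbb{D}_2$, which is positive definite, so $\overline{M}^2$ is indeed a Riemann surface. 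Since $\mathbb{D}_2=(\ker\d\Xi)^\bot$ is the horizontal distribution and the M\"obius metric restricts to $\o_1^2+\o_2^2$ there, $\pi$ scales horizontal lengths by the constant $\sqrt2$, i.e. it is a Riemannian submersion up to $\sqrt2$. Finally (3) is immediate: for a constant-rank-$2$ map the fibers are $(m-2)$-dimensional submanifolds with tangent spaces $\ker\d\Xi=\mathbb{D}_2^\bot$, so $\mathbb{D}_2^\bot$ is integrable with the fibers as its integral submanifolds; alternatively one verifies $\o_1([E_a,E_b])=\o_2([E_a,E_b])=0$ for $a,b\ge 3$ straight from \eqref{omega1a2a}.

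The only genuinely delicate point is the bookkeeping that isolates the rank-$2$ behaviour: one must be sure that nothing from $\d\xi_\alpha$ ($\alpha\ge 3$) leaks into $W^\bot$ and that the transverse motion of $\xi_1,\xi_2$ is confined to $\mathrm{Span}\{Y,Y_1,Y_2\}$ and driven only by $\o_1,\o_2$. Both facts rest on the vanishing $C^\alpha_i=0,\ C^r_a=0$ together with $B^\alpha=0$ recorded in \eqref{C1C2}--\eqref{Cai} and \eqref{3.1}; once these are in hand, everything above is the short computation indicated rather than anything conceptually hard.
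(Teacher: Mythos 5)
Your proof is correct and follows essentially the same route as the paper's: both differentiate $\Xi$ via the last structure equation together with \eqref{3.1}, \eqref{C1C2}, \eqref{Cai}, conclude $E_a(\Xi)=0$ with $\d\Xi(E_1),\d\Xi(E_2)$ independent, and read off the induced metric $2(\o_1^2+\o_2^2)$, with conclusion (3) obtained from the fibers of the resulting submersion (or Frobenius via \eqref{omega1a2a}). The only difference is cosmetic: you represent tangent vectors to the Grassmannian as elements of $\mathrm{Hom}(W,W^\bot)$ while the paper writes them as wedge products $\eta\wedge\xi_2\wedge\ast$, etc., and you make the constant-rank argument explicit where the paper leaves it implicit.
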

\begin{proof}
When $p=2$, these conclusions and Theorem~\ref{thm-harmonic} has been proved in \cite{LiTZ2}.
In the general case when $p\ge 3$, we adopt the convention $3\le a\le m, 3\le \alpha\le p$ on the indices.
Then it follows from \eqref{eq-structure} and Proposition~\ref{prop-connection} that
\begin{align}
E_1(\Xi)&=-[\eta_2\wedge\xi_2\wedge\!\ast
+\xi_1\wedge\eta_1\wedge\!\ast], ~~(\!\ast\triangleq\xi_3\wedge\xi_4\wedge\cdots\wedge\xi_p)\\
E_2(\Xi)&=-[\eta_1\wedge\xi_2\wedge\!\ast
-\xi_1\wedge\eta_2\wedge\!\ast],\\
E_a(\Xi)&=0,~~~\forall~3\le a\le m.
\end{align}
Consequently, the tangent space $\Xi_*T_x\overline{M}^2\subset T_{\Xi(x)}\mathrm{Gr}(p,\mathbb{R}^{m+p+2}_1)$ is a plane given by
\[
\mathrm{Span}\{\eta_2\wedge\xi_2\wedge\!\ast
+\xi_1\wedge\eta_1\wedge\!\ast, ~~ \eta_1\wedge\xi_2\wedge\!\ast
-\xi_1\wedge\eta_2\wedge\!\ast\} ,
\]
and the induced metric is $\d s^2=2[(\omega_1)^2+(\omega_2)^2]$.
This proves the first two conclusions. In particular the image of $\Xi$ is a 2-dimensional surface $\overline{M}^2$.

As the the kernel of the tangent map $\pi_*$,   $\mathbb{D}_2^\bot$, the vertical subspace at every point, is always an integrable distribution whose integral submanifolds are nothing but the fibers of this submersion. Conclusion (3) follows immediately (or by the expressions of $\omega_{1\alpha},\omega_{2\alpha}$ in \eqref{omega1a2a} and the Frobenius Theorem).
\end{proof}

\begin{proof}[Proof to Theorem~\ref{thm-harmonic}]~\\
\indent According to Proposition~\ref{prop-submersion}, we can regard $\Xi$ as a conformal immersion from the Riemann surface $\overline{M}^2$ to $\mathrm{Gr}(p,\mathbb{R}^{m+p+2}_1)$. $E_1,E_2$ can be viewed as horizontal lift of an orthonormal basis (up to the factor $\sqrt{2}$) of $(T\overline{M}^2,\d s^2)$. The second fundamental form of $\Xi(\overline{M}^2)$ can be read out from a straightforward computation as below using the structure equations:
\begin{align*}
E_1&E_1(\Xi)
=2\Xi+(\Omega_{12}+\theta_{12})(E_1)
\left[\xi_1\wedge\eta_2\wedge\!\ast
-\eta_1\wedge\xi_2\wedge\!\ast\right]
+2\eta_1\wedge\eta_2\wedge\!\ast\\
&-L\eta_3\wedge\xi_2\wedge\!\ast
-\xi_1\wedge\left(\hat{F}Y+\hat{Y}+\frac{G}{L}\eta_3\right)\wedge\!\ast
+\xi_1\wedge\xi_2\cdots\wedge
(S_{\alpha}\eta_2+T_{\alpha}\eta_1)\wedge\cdots\xi_p.
\end{align*}
In the final expression, the first term is the radial component, the second is the tangent component, and the third term can be ignored because it is not in the tangent space $T_{\Xi}\mathrm{Gr}(p,\mathbb{R}^{m+4}_1)$ at $\Xi=\xi_1\wedge\cdots\wedge \xi_p$. The last three terms are the normal component. Similarly we compute out
\begin{align*}
E_2&E_2(\Xi)
=2\Xi+(\Omega_{12}+\theta_{12})(E_2)
\left[\eta_2\wedge\xi_2\wedge\!\ast
+\xi_1\wedge\eta_1\wedge\!\ast\right]
+2\eta_1\wedge\eta_2\wedge\!\ast\\
&+L\eta_3\wedge\xi_2\wedge\!\ast
+\xi_1\wedge\left(\hat{F}Y+\hat{Y}+\frac{G}{L}\eta_3\right)\wedge\!\ast
+\xi_1\cdots\wedge
(-T_{\alpha}\eta_1-S_{\alpha}\eta_2)\wedge\cdots\xi_p.
\end{align*}
Thus $(E_1E_1+E_2E_2)\Xi$ has only radial and tangent components. In other words, the mean curvature vector of the surface
$\Xi:\overline{M}^2\subset \mathrm{Gr}(p,\mathbb{R}^{m+4}_1)$ vanishes. In the same manner we derive
\begin{align*}
E_1&E_2(\Xi)
=(\Omega_{12}+\theta_{12})(E_1)
\left[\eta_2\wedge\xi_2\wedge\!\ast
+\xi_1\wedge\eta_1\wedge\!\ast\right]\\
&+L\xi_1\wedge\eta_3\wedge\!\ast
-\left(\hat{F}Y+\hat{Y}+\frac{G}{L}\eta_3\right)\wedge\xi_2\wedge\!\ast
+\xi_1\cdots\wedge
(S_{\alpha}\eta_1-T_{\alpha}\eta_2)\wedge\cdots\xi_p~.
\end{align*}
Its normal component has the same squared norm
as that of $E_1E_1(\Xi)$ and $E_2E_2(\Xi)$. Thus its curvature ellipse is a circle, which is the characteristic of a \emph{super-conformal} surface. So $\Xi:\overline{M}^2\subset \mathrm{Gr}(p,\mathbb{R}^{m+4}_1)$ is a conformal super-minimal immersion.
\end{proof}

\section{The spherical foliation structure}
This section is devoted to the proof of Theorem~\ref{thm-envelop}.

By Theorem~\ref{thm-harmonic}, the mean curvature spheres $\mathrm{Span}\{\xi_1,\cdots,\xi_p\}$ is a 2-parameter family, with the parameter space being $\overline{M}^2$. It is well-known that such a sphere congruence has an envelope $\widehat{M}^m$ if and only if
$V=\mathrm{Span}\{\xi_r,\d\xi_r:1\le r\le p\}$ form a space-like sub-bundle of the trivial bundle $\mathbb{R}^{m+p+2}_1\times\overline{M}^2$. This is satisfied in our situation by \eqref{J}, with $V=\mathrm{Span}\{\xi_r,\eta_1,\eta_2\}$ being a spacelike sub-bundle of rank $p+2$. In particular, the points of the envelope correspond to the lightlike directions in its orthogonal sub-bundle $V^\bot$ over $\overline{M}^2$.
By construction, $\widehat{M}^m\supset M^m$; in general we would expect it to be a $m$-dimensional submanifold (possibly with singularities).

We have noticed that the distribution $\mathbb{D}_2^{\bot}=\mathrm{Span}\{E_3,\cdots,E_m\}$
is integrable; the integral submanifolds are fibers of the Riemannian submersion mentioned before. We assert that each fiber is contained in a $(m-2)$-dimensional sphere determined by the spacelike subspace
$V$ at some point $q\in\overline{M}^2$. This is because of \eqref{eq-deta}, which implies that the subspace $V$
is fixed along any integral submanifold of $\mathbb{D}_2^{\bot}=\mathrm{Span}\{E_3,\cdots,E_m\}$. In particular, the integration of $Y$ along $\mathbb{D}_2^{\bot}$ is always contained in $V^\bot$, which implies that any integral submanifold is located on the corresponding $(m-2)$-dimensional sphere.
This proves the first conclusion of Theorem~\ref{thm-envelop}.

Next we introduce a new moving frame $\{Y,\hat{Y},\eta_1,\eta_2,\eta_a;
\xi_r\}$ along $M$, which is an orthonormal frame except that $Y,{\hat Y}$ are lightlike with $\langle Y,{\hat Y}\rangle=1$. They are
\begin{equation}\label{eq-eta}
\eta_1=Y_1+VY,~~\eta_2=Y_2-UY,~~~\eta_a=Y_a+\lambda_a Y.
\end{equation}
Here $\{\lambda_a\}_{a=3}^m$ are real numbers chosen arbitrarily, depending smoothly on the underlying Riemann surface $\overline{M}^2$.
By conclusions in the previous paragraph, $\overline{M}^2$ and $\{\lambda_a\}_{a=3}^m$ give a parametrization of $\widehat{M}^m$. When $\{\lambda_a\}_{a=3}^m$ vary arbitrarily, the point corresponding to the lightlike direction
\begin{equation}\label{eq-yhat}
\hat{Y}=N-\frac{1}{2}(V^2+U^2+\sum\nolimits_a \lambda_a^2)Y-V Y_1+U Y_2+\sum\nolimits_a\lambda_a Y_a
\end{equation}
will travel around the whole envelope $\widehat{M}^m$. Thus we may regard $\hat{Y}$ as a local lift of the parameterized submanifold $\widehat{M}^m$, and any property of $\widehat{M}^m$ can be obtained from $\hat{Y}$ with arbitrarily given $\{\lambda_a\}_{a=3}^m$. This is the key point in our analysis.

We will focus on the regular subset where $\widehat{M}^m$ is immersed. Using the new moving frame \eqref{eq-eta} and \eqref{eq-yhat}, there is a new system of structure equations:
\begin{align}
\d\xi_{1}&=-\o_2\eta_1-\o_1\eta_2+\theta_{12}\xi_2,
\label{3.3}\\
\d\xi_{2}&=-\o_1\eta_1+\o_2\eta_2-\theta_{12}\xi_1,
\label{3.4}\\
\d\xi_{\alpha}&=-\theta_{1\alpha}\xi_1-\theta_{2\alpha}\xi_2
+\sum\nolimits_\beta\theta_{\alpha\beta}\xi_\beta,
\label{3.5}\\
\d\eta_1&=-{\hat\o}_1Y-\o_1{\hat
Y}+\sum\nolimits_k\Omega_{1k}\eta_k+\o_2\xi_1+\o_1\xi_2,\label{3.6}\\
\d\eta_2&=-{\hat\o}_2Y-\o_2{\hat
Y}+\sum\nolimits_k\Omega_{2k}\eta_k+\o_1\xi_1-\o_2\xi_2,\label{3.7}\\
\d\eta_a&=-{\hat\o}_aY-\o_a{\hat Y}+\sum\nolimits_k\Omega_{ak}\eta_k,\label{3.8}\\
\d Y&=\o Y+\o_1\eta_1+\o_2\eta_2+\sum\nolimits_a\o_a\eta_a,\label{3.9}\\
\d{\hat Y}&=-\o {\hat Y}+{\hat\o}_1\eta_1+{\hat\o}_2\eta_2+\sum\nolimits_a{\hat\o}_a\eta_a. \label{3.10}
\end{align}
Here $\o,\o_k,\hat\o_k,\Omega_{jk}$ are
1-forms locally defined on $\widehat{M}^m$ which we don't need to know explicitly.

We claim that the envelope $\widehat{M}^m$, viewed as an immersion $[\hat{Y}]$ into the sphere, still has $\mathrm{Span}_{\mathbb{R}}\{\xi_1,\xi_2,\cdots,\xi_p\}$ as its mean curvature sphere.

As a preparation, it is important to notice that there exist some functions $\hat{F},\hat{G}$ such that
\begin{equation}\label{eq-FG}
{\hat\o_1}=\hat{F}\o_1+\hat{G}\o_2,~~ {\hat\o_2}=-\hat{G}\o_1+\hat{F}\o_2.
\end{equation}
This follows from \eqref{eq-deta} and \eqref{eq-yhat} directly (or from the integrability conditions of the system \eqref{3.3}$\sim$\eqref{3.10}). Based on this, under the induced metric $\langle \d\hat{Y},\d\hat{Y}\rangle=\sum_{j=1}^m\hat\o_j^2$
we take a frame $\{\hat{E}_j\}_{j=1}^m$ so that $\hat\o_i(\hat{E}_j)=(\hat{F}^2+\hat{G}^2)\delta_{ij}$.
Since $\widehat{M}^m$ is assumed to be immersed, $\hat{F}^2+\hat{G}^2\ne 0$. Modulo the components in $\mathbb{D}_2^\bot=\mathrm{Span}\{E_3,\cdots,E_m\}$ one gets
\begin{equation}\label{eq-Ehat}
\hat{E}_1\approx\hat{F}\hat{E}_1+\hat{G}\hat{E}_2,~~
\hat{E}_2\approx-\hat{G}\hat{E}_1+\hat{F}\hat{E}_2,~~
\hat{E}_a\approx0~~(\mathrm{mod}~\mathbb{D}_2^\bot).
\end{equation}
Next we compute the Laplacian $\hat{\Delta}\hat{Y}$. The mean curvature sphere at $\hat{Y}$ is determined by
\[
\mathrm{Span}_{\mathbb{R}}\{\hat{Y},\hat{Y}_j,\sum\nolimits_{j=1}^m \hat{E}_j\hat{E}_j(\hat{Y})\}
=\mathrm{Span}_{\mathbb{R}}\{\hat{Y},\hat{Y}_j, \hat{\Delta}\hat{Y}\}.
\]

To verify our claim, it suffices to show $\langle\sum_{j=1}^m \hat{E}_j\hat{E}_j(\hat{Y}),\xi_r\rangle=0.$
Because $\langle\hat{Y},\xi_r\rangle=0=\langle \d\hat{Y},\xi_r\rangle=\langle \hat{Y},\d\xi_r\rangle$, this is also equivalent to
\[
\langle \hat{Y},\sum\nolimits_{j=1}^m \hat{E}_j\hat{E}_j(\xi_r)\rangle=0,
~~~~1\le r\le p.
\]
This can be checked directly using \eqref{eq-Ehat} and \eqref{3.3}$\sim$\eqref{3.7}. As a consequence, the previous claim is proved.

Finally, for $\hat{Y}$ we take its canonical lift, whose derivatives are clearly combinations of $\hat{Y},\eta_1,\eta_2,\eta_a$. Its normal frame is just $\{\xi_1,\xi_2\}$ as we have shown. One reads from \eqref{3.3} and \eqref{3.4} that this is still a Wintgen ideal submanifold, which finishes the proof.

\section{Special classes of Wintgen ideal submanifolds}

This section reviews our recent work on Wintgen ideal submanifolds from a unified viewpoint of the conformal Gauss map $\Xi$ and the fiber bundle structure over $\overline{M}^2$. In the codimension two case we have the following result \cite{LiTZ2}, where $\Xi$ can be identified with the second Gauss map $[\xi]$ from the Riemann surface $\overline{M}^2$. The theorem below is stronger than Theorem~\ref{thm-harmonic} by replacing \emph{harmonic map} by \emph{holomorphic map}. It also supplement Theorem~\ref{thm-envelop} by showing the converse is also true.

\begin{theorem}\label{thm-codim2}\cite{LiTZ2}
The conformal Gauss map $[\xi]=[\xi_{1}-i\xi_{2}] \in \mathbb{Q}^{m+2}_+$ of a Wintgen ideal submanifold of codimension two is a holomorphic and 1-isotropic curve, i.e., with respect to a local complex coordinate $z$ of $\overline{M}^2$, $\xi_{\bar{z}}\parallel \xi, \<\xi_z,\xi_z\>=0.$
Conversely, given a holomorphic 1-isotropic curve
$[\xi]:\overline{M}^2\to \mathbb{Q}^{m+2}_+\subset\mathbb{C}P^{m+3},$
the envelope $\widehat{M}^m$ of the corresponding 2-parameter family spheres is a $m$-dimensional Wintgen ideal submanifold (at the regular points).
\end{theorem}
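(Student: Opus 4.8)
The plan is to prove the two implications separately, each time exploiting the codimension-two form of the structure equation \eqref{J}. When $p=2$ there are no indices $\alpha\ge 3$, so \eqref{J} collapses to $\d\xi=\i(\o_1+\i\o_2)(\eta_1+\i\eta_2)+\i\theta_{12}\xi$, and by \eqref{eq-eta12} the pair $\eta_1=Y_1+VY,\ \eta_2=Y_2-UY$ is spacelike orthonormal and orthogonal to $\xi_1,\xi_2$. Note also that both $\xi=\xi_1-\i\xi_2$ and $\eta_1+\i\eta_2$ are \emph{isotropic} in the complex-bilinear sense: $\<\xi,\xi\>=\<\xi_1,\xi_1\>-\<\xi_2,\xi_2\>-2\i\<\xi_1,\xi_2\>=0$, and likewise $\<\eta_1+\i\eta_2,\eta_1+\i\eta_2\>=0$.

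\emph{Forward direction.} First I would fix a local complex coordinate $z$ on $\overline{M}^2$ adapted to the conformal class of the induced metric $\d s^2=2[(\o_1)^2+(\o_2)^2]$, so that $\o_1+\i\o_2=\lambda\,\d z$ is of type $(1,0)$ for a nowhere-zero $\lambda$. Splitting $\d\xi=\xi_z\,\d z+\xi_{\bar z}\,\d\bar z$ into bidegrees and using that $\theta_{12}$ is a real $1$-form, the $(0,1)$-part of the right side of \eqref{J} is a multiple of $\xi$; hence $\xi_{\bar z}\parallel\xi$, which is exactly the holomorphicity of $[\xi]$. For $1$-isotropy, the $(1,0)$-part gives $\xi_z=\i\lambda(\eta_1+\i\eta_2)+c\,\xi$ for some function $c$. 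Since $\xi$ and $\eta_1+\i\eta_2$ are isotropic and mutually orthogonal, expanding $\<\xi_z,\xi_z\>$ annihilates every term and yields $\<\xi_z,\xi_z\>=0$. This direction is essentially a direct reading of \eqref{J}.

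\emph{Converse direction.} Here I would reverse the construction. Starting from a holomorphic $1$-isotropic $[\xi]\in\mathbb{Q}^{m+2}_+$ with $\xi=\xi_1-\i\xi_2$, the condition $\<\xi,\xi\>=0,\ \<\xi,\bar\xi\>>0$ lets me normalise $\xi_1,\xi_2$ to a spacelike orthonormal pair, so $\<\xi,\bar\xi\>=2$. Holomorphicity reads $\xi_{\bar z}=\mu\xi$; differentiating $\<\xi,\xi\>=0$ and $\<\xi,\bar\xi\>=2$ gives $\<\xi_z,\xi\>=0$ and $\<\xi_z,\bar\xi\>=-2\bar\mu$. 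I would therefore set $\zeta:=\xi_z+\bar\mu\,\xi$, which by these identities is orthogonal to both $\xi$ and $\bar\xi$, and which by $1$-isotropy satisfies $\<\zeta,\zeta\>=\<\xi_z,\xi_z\>=0$. Writing $\zeta=\eta_1+\i\eta_2$ then produces, after normalisation, a spacelike orthonormal pair $\eta_1,\eta_2$ orthogonal to $\xi_1,\xi_2$. Thus $V=\mathrm{Span}_{\R}\{\xi_1,\xi_2,\eta_1,\eta_2\}$ is a spacelike sub-bundle of rank four, exactly the envelope condition invoked in Section~5; the role of $1$-isotropy is precisely to force $\<\zeta,\zeta\>=0$, hence to make $\eta_1,\eta_2$ orthonormal so that $V$ has the right signature.

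\emph{Completing the converse.} With $V$ in hand I would define $\widehat{M}^m$ as the set of lightlike directions in the Lorentzian orthogonal complement $V^\bot$ (of signature $(m-1,1)$), which fibres over $\overline{M}^2$ with round $S^{m-2}$ fibres and so has dimension $m$. Choosing a lightlike lift $\hat Y\in V^\bot$ together with its canonical Möbius normalisation, I would recompute $\d\hat Y$ and the derivatives of the frame to recover the system \eqref{3.3}--\eqref{3.10}; in particular \eqref{3.3} and \eqref{3.4} exhibit $\mathrm{Span}\{\xi_1,\xi_2\}$ as the mean curvature sphere of $[\hat Y]$ and display the Möbius second fundamental form in the Wintgen ideal normal form \eqref{3.1}. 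At the regular points, where $\hat Y$ is an immersion, this shows $\widehat{M}^m$ is Wintgen ideal. I expect the main obstacle to be this last step: one must verify that the reconstructed frame closes up consistently (the integrability conditions being supplied automatically by the holomorphic $1$-isotropic data), that the canonical lift reproduces \eqref{3.3}--\eqref{3.4} rather than a merely similar-looking system, and that the envelope is generically a genuine $m$-dimensional immersion; tracking the fibre parameters $\{\lambda_a\}$ from Section~5 and the regularity locus is the fiddly part, whereas the conceptual content is carried entirely by the two identities $\xi_{\bar z}\parallel\xi$ and $\<\xi_z,\xi_z\>=0$.
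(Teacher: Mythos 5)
Your proposal is correct and follows essentially the route the paper itself sets up: the forward direction is a direct reading of the codimension-two form of \eqref{J} (where the $\alpha$-terms vanish), and the converse is exactly the envelope construction of Section~5, with $1$-isotropy supplying the spacelike signature of $V=\mathrm{Span}\{\xi_1,\xi_2,\eta_1,\eta_2\}$ at regular points. The paper does not reprove this theorem here (it cites \cite{LiTZ2}), but every ingredient you invoke --- equation \eqref{J}, the structure system \eqref{3.3}--\eqref{3.10}, and the $\hat{\Delta}\hat{Y}$ computation showing the envelope keeps $\mathrm{Span}\{\xi_1,\xi_2\}$ as its mean curvature sphere and hence is Wintgen ideal --- appears in Sections~3--5 and supplies precisely the ``fiddly'' final step you deferred.
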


\begin{remark}
Dajczer et. al. \cite{Dajczer3} have shown that codimension two Wintgen ideal submanifolds can always be constructed from Euclidean minimal surfaces. Our description is equivalent to theirs by a complex stereographic projection from $\mathbb{Q}^{m+2}_+$ to the complex space $\mathbb{C}^{m+2}=\mathbb{R}^{m+2}\otimes\mathbb{C}$, which maps holomorphic 1-isotropic curves in one space to holomorphic 1-isotropic curves in another space.
\end{remark}

Consider the canonical distribution $\mathbb{D}_2=\mathrm{Span}\{E_1,E_2\}$. In the Riemannian submersion structure $\pi:\widehat{M}^m\to \overline{M}^2$, it can be viewed as the horizontal lift (at various points) of the tangent plane $T\overline{M}^2$. By Proposition~\ref{prop-connection}, $\mathbb{D}_2$ is integrable if and only if $L=0$. This is the geometric meaning of the invariant $L=-B^1_{11,3}$ for a Wintgen ideal submanifold. In general we may consider the integrable distribution generated by $\mathbb{D}_2$ with the lowest dimension $k$ and denote it as $\mathbb{D}$. Related with the case $k<m$ we have the following conjecture, which has been proved for $k=2$ \cite{LiTZ1} and for $k=3,4,5$ (not published).

\begin{conjecture}\label{conj-reduce}
Let $x:M^m\longrightarrow \mathbb{R}^{m+p}$ be a Wintgen ideal submanifold without umbilic points. If the canonical distribution $\mathbb{D}_2$ generates an integrable distribution $\mathbb{D}$ with dimension $k<m$, then locally $x$ is M\"{o}bius equivalent to a cone (res. a cylinder; a rotational submanifold) over a $k$-dimensional minimal Wintgen ideal submanifold in ${\mathbb S}^{k+p}$ (res. in ${\mathbb R}^{k+p}$; in ${\mathbb H}^{k+p}$.)
\end{conjecture}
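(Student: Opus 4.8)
The plan is to reduce the problem to the already-solved case $k=2$ by peeling off the extra integrable directions one at a time, exploiting the Riemannian submersion structure together with the spherical foliation of Theorem~\ref{thm-envelop}. First I would study the geometry of the intermediate distribution $\mathbb{D}$ of dimension $k$ with $2\le k<m$. Since $\mathbb{D}\supset\mathbb{D}_2$ is integrable and contains the horizontal lift of $T\overline{M}^2$, its orthogonal complement inside $\mathbb{D}_2^\bot$ is spanned by some of the $E_a$; using Proposition~\ref{prop-connection}, especially the formulas \eqref{omega1a2a} for $\omega_{1a},\omega_{2a}$ and the vanishing $L_a=0$ for $a\ge 4$, I would show that one can choose the frame $\{E_3,\dots,E_m\}$ so that $\mathbb{D}=\mathrm{Span}\{E_1,\dots,E_k\}$, with the single nonzero invariant $L=L_3$ forcing $E_3\in\mathbb{D}$ whenever $L\ne 0$. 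The key structural claim to isolate is that the leaves of $\mathbb{D}$ are themselves $k$-dimensional Wintgen ideal submanifolds: this should follow by restricting the M\"obius second fundamental form \eqref{3.1} and checking, via the covariant-derivative relations \eqref{bb1}--\eqref{bb4}, that the restriction again has the canonical form on a $k$-dimensional leaf.

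Next I would analyze how the leaves sit in the ambient space. The envelope picture of Theorem~\ref{thm-envelop} already shows that the complementary directions $E_{k+1},\dots,E_m$ sweep out round spheres; the new content when $k<m$ is that the whole family of leaves is organized by a \emph{fixed} lower-dimensional sphere, cone, or cylinder structure, detected by which of the vectors $Y,N,\xi_r$ in the moving frame stay constant in a suitable sense. Concretely, I would integrate the structure equations \eqref{eq-structure} along $\mathbb{D}^\bot=\mathrm{Span}\{E_{k+1},\dots,E_m\}$ and look for a constant vector subspace $W\subset\mathbb{R}^{m+p+2}_1$ (or a constant lightlike/spacelike/timelike direction) that is preserved. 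The signature of $W$ is exactly what distinguishes the three cases: a constant timelike direction gives a cone over a minimal submanifold in a sphere, a constant lightlike direction gives a cylinder over a minimal submanifold in Euclidean space, and a constant spacelike direction gives a rotational submanifold over a minimal submanifold in hyperbolic space. This is the standard M\"obius-geometric dictionary for warped-product/constant-mean-curvature-sphere reductions, and the minimality of the $k$-dimensional factor should drop out from the fact that the leaf is Wintgen ideal together with the vanishing of the M\"obius form in the fiber directions, which is encoded in \eqref{C1C2}--\eqref{Cai}.

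The main obstacle I expect is the induction step that actually produces the constant subspace $W$, i.e. proving that the relevant frame vectors are parallel in the flat connection of $\mathbb{R}^{m+p+2}_1$ along $\mathbb{D}^\bot$. The vanishing $E_a(\Xi)=0$ from Proposition~\ref{prop-submersion} tells us the mean-curvature-sphere bundle $\mathrm{Span}\{\xi_r\}$ is constant along all of $\mathbb{D}_2^\bot$, but to get a genuine reduction I must also control how $Y$, $N$, and the tangent frame evolve along the \emph{larger} distribution $\mathbb{D}^\bot$, and show that the obstruction to finding a parallel $W$ is precisely the non-integrability invariant $L$ (or its higher analogues) vanishing on $\mathbb{D}^\bot$. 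Establishing that the integrability of $\mathbb{D}$ forces these additional connection forms to degenerate into an honest direct-sum (warped-product) splitting of the Lorentz space is the technical heart; once $W$ is constructed and its signature identified, the identification with a cone, cylinder, or rotational submanifold, and the minimality and Wintgen-ideal property of the $k$-dimensional base, should follow by the same computations as in the $k=2$ case treated in \cite{LiTZ1}.
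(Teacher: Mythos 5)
You should first note what the paper actually does with this statement: it does \emph{not} prove it. It is stated as Conjecture~\ref{conj-reduce}, proved only for $k=2$ in \cite{LiTZ1} and for $k=3,4,5$ in unpublished work, and the authors explicitly name the obstruction to the general case --- when $k$ changes, the adapted frame $\{Y,\hat{Y},\eta_1,\eta_2,\eta_a\}$ and the invariant $1$-form $\omega$ (for instance $\omega=-C^1_2\omega_1-C^1_1\omega_2+\frac{E_3(L)}{L}\omega_3$ when $k=3$, from \cite{XLMW}) must be re-derived case by case, and ``a unified treatment is still lacking.'' Your proposal does not close this gap: the step you yourself call the ``technical heart'' --- producing from the integrability of $\mathbb{D}$ a subspace $W\subset\mathbb{R}^{m+p+2}_1$ that is parallel in the flat connection, for arbitrary $k$ --- is exactly the open problem, and deferring it to ``the same computations as in the $k=2$ case'' is precisely what the authors say cannot be done uniformly. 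So what you have written is a research plan that reassembles the known ingredients (invariant-subspace decomposition, signature trichotomy for cone/cylinder/rotational, flatness of $\mathrm{Span}\{Y,\hat{Y}\}$), not a proof.

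Two concrete pieces of the plan would moreover fail as stated. First, the induction ``peeling off the extra integrable directions one at a time'' cannot start: $\mathbb{D}$ is by definition the integrable distribution of \emph{lowest} dimension containing $\mathbb{D}_2$, so there are no intermediate integrable distributions between $\mathbb{D}_2$ and $\mathbb{D}$ to descend through; and when $k>2$ the distribution $\mathbb{D}_2$ itself is non-integrable (equivalently $L\neq 0$, by Proposition~\ref{prop-connection}), so the $k=2$ theorem of \cite{LiTZ1} never becomes applicable along the way. Passing to a leaf of $\mathbb{D}$ does not rescue this: a leaf is a $k$-dimensional submanifold of codimension $m+p-k$, and the assertion that it is M\"obius equivalent to a \emph{minimal} Wintgen ideal submanifold is essentially Conjecture~\ref{conj-minimal}, which is equally open in general --- your argument reduces one open conjecture to another. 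Second, minimality of the base cannot ``drop out'' of Wintgen-ideality together with \eqref{C1C2}--\eqref{Cai}: minimality is not a M\"obius-invariant property, so the claim is meaningless until the constant subspace $W$ (the space-form gauge) has been constructed. The constancy you invoke, $E_a(\Xi)=0$ from Proposition~\ref{prop-submersion}, only says the mean curvature sphere is constant along the fibers of $\pi$; the subspace $W$ you need must be parallel in \emph{all} directions on $M^m$, which in the known cases is obtained by proving $\d\omega=0$ (flatness of the plane bundle $\mathrm{Span}\{Y,\hat{Y}\}$) from the integrability of $\mathbb{D}$ --- a computation your sketch names as an expectation but never performs, and which is where the actual difficulty lives.
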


In our attempts to prove this \emph{reduction conjecture} for Wintgen ideal submanifolds with a low dimensional ($\mathrm{dim}(\mathbb{D})=k$ is fixed) integrable distribution $\mathbb{D}$, we notice that it is possible to choose a new frame
$\{Y,\hat{Y},\eta_1,\eta_2,\eta_a\}$ with similar expressions as \eqref{eq-eta} and \eqref{eq-yhat} (some kind of \emph{gauge transformation}), which helps to find a decomposition of $\mathbb{R}^{m+p+2}_1$ into invariant subspaces \cite{LiTZ1}. Moreover, the integrability of $\mathbb{D}$ implies that the Lorentz plane bundle $\mathrm{Span}\{Y,\hat{Y}\}$ is flat, i.e., the connection 1-form $\o=\d Y\cdot \hat{Y}$ is closed. Another conclusion is that the correspondence $[Y]\leftrightarrow [\hat{Y}]$ is a conformal map from $\widehat{M}^m$ to itself. We strongly believe that these facts are always true for arbitrary $k\ge 2$.

In all cases we know, $\o$ is a well-defined M\"obius invariant whose explicit expression depends on $k$. For example, when $k=3$,
$\o=-C^1_2\o_1-C^1_1\o_2+\frac{E_3(L)}{L}\o_3$ \cite{XLMW}.

A natural question arises: for a fixed $k$ and Wintgen ideal submanifolds of dimension $m=k$ which are \emph{irreducible (i.e., the only integrable distribution containing $\mathbb{D}_2$ is the tangent bundle of $M$)}, what is the meaning of $\d\o=0$? We conjecture the following characterization result, which has been proved for the case $m=3,p=2$ \cite{XLMW} and the general 3-dimensional case (to appear later).

\begin{conjecture}\label{conj-minimal}
For an irreducible Wintgen ideal submanifold $M^k$ of dimension $k\ge 3$, if $\d\o=0$, then $M^k$ is M\"obius equivalent to a minimal Wintgen ideal submanifold in either of the three space forms.
\end{conjecture}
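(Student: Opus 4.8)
The plan is to read off minimality from the mean curvature sphere and then to manufacture, out of the flatness hypothesis $\d\o=0$, a constant vector of $\R^{m+p+2}_1$ that certifies it. Recall that in the light-cone model a nonzero constant vector $C$ cuts out one of the three space forms according to its causal type (timelike, lightlike, spacelike $\leftrightarrow$ $\S^{k+p},\R^{k+p},\H^{k+p}$), and that $M^k$ is minimal in the space form determined by $C$ precisely when its mean curvature sphere is orthogonal to $C$, i.e. $\<C,\xi_r\>=0$ for all $r$. Since $\{Y,\hat Y,\eta_1,\eta_2,\eta_3,\cdots,\eta_m\}$ is exactly the orthogonal complement of $\mathrm{Span}\{\xi_1,\cdots,\xi_p\}$, the entire problem reduces to producing a nonzero parallel section
\[
C=aY+b\hat Y+\textstyle\sum_i c_i\eta_i
\]
of the rank-$(k+2)$ bundle $W:=(\mathrm{Span}\{\xi_1,\cdots,\xi_p\})^\perp$. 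Its self-product $\<C,C\>=2ab+\sum_a c_a^2$ is then automatically constant, its sign selects the target space form, and minimality is immediate from $C\perp\xi_r$.

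First I would pin down the shape of $C$. Differentiating the ansatz and reading off the $\xi_1,\xi_2$-components with \eqref{3.6}--\eqref{3.7}, one finds that the $\xi_1$- and $\xi_2$-parts of $\d C$ are $c_1\o_2+c_2\o_1$ and $c_1\o_1-c_2\o_2$; since $\o_1,\o_2$ are independent (they are pulled back from $\overline{M}^2$), $\d C=0$ forces $c_1=c_2=0$. Thus any candidate lives in $\mathrm{Span}\{Y,\hat Y,\eta_3,\cdots,\eta_m\}$, and the remaining $Y,\hat Y,\eta_1,\eta_2,\eta_a$-components of $\d C$ — computed from \eqref{3.6}--\eqref{3.10} together with \eqref{eq-FG} — become a first-order linear system for $(a,b,c_a)$ whose coefficients are the connection forms $\o,\hat\o_a,\Omega_{ak}$ and the functions $\hat F,\hat G$.

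The crux is to integrate this system. Here the hypothesis $\d\o=0$ enters as the flatness of the Lorentz plane bundle $\mathrm{Span}\{Y,\hat Y\}$, so that $\o=\d\tau$ locally; after the gauge $Y\mapsto e^{-\tau}Y,\ \hat Y\mapsto e^{\tau}\hat Y$ the $\o$-terms in the $Y$- and $\hat Y$-equations are absorbed, and the obstruction to a parallel section collapses to a curvature identity that $\d\o=0$ is designed to supply. I expect this to be the main obstacle: checking that the integrability conditions of the $(a,b,c_a)$-system hold in full generality requires feeding in all the Wintgen-ideal invariants ($L_a,S_\alpha,T_\alpha$ and the fiber connection $\Omega_{ab}$) together with \eqref{equa1}--\eqref{equa5}, and showing that the cross terms conspire to vanish exactly when $\o$ is closed. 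The three-dimensional computation of \cite{XLMW}, where $\o=-C^1_2\o_1-C^1_1\o_2+\frac{E_3(L)}{L}\o_3$, should serve as the organizing template.

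Finally, irreducibility provides the nondegeneracy that makes the construction well posed. For an irreducible $M^k$ with $k\ge 3$ the canonical distribution $\mathbb{D}_2$ is itself non-integrable, so by Proposition~\ref{prop-connection} the invariant $L=-B^1_{11,3}$ is nowhere zero; this is precisely what is needed for the quantities $G/L$, $\eta_3$ and the expression of $\o$ to be defined, and it prevents $M^k$ from degenerating into the cone/cylinder/rotation regime of Conjecture~\ref{conj-reduce}, where the flatness would instead encode a higher-dimensional invariant subspace. Granting the parallel $C$, its causal type — read off from the sign of the constant $\<C,C\>=2ab+\sum_a c_a^2$ — identifies $M^k$ as a minimal Wintgen ideal submanifold in $\S^{k+p}$, $\R^{k+p}$, or $\H^{k+p}$ accordingly, which is the assertion.
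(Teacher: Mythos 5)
First, a point of context: the statement you set out to prove is presented in the paper as a \emph{conjecture}. The paper contains no proof of it; it records only that the case $k=3$ is known (for $m=3,p=2$ in \cite{XLMW}, with the general three-dimensional case announced separately), and it explicitly identifies the obstacle for general $k$: the frame $\{Y,\hat{Y},\eta_1,\eta_2,\eta_a\}$ and the expression of $\o$ must be modified with $k$, and ``a unified treatment is still lacking.'' So there is nothing in the paper to compare your argument against; the only question is whether your proposal closes the conjecture, and it does not.

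Your reduction is the right one, and it is the standard one in this subject: $M^k$ is M\"obius equivalent to a minimal submanifold of a space form exactly when there is a nonzero constant vector $C\in\R^{m+p+2}_1$ with $\<C,\xi_r\>=0$ for all $r$, the causal type of $C$ selecting $\S^{k+p}$, $\R^{k+p}$ or $\H^{k+p}$. Your computation from \eqref{3.6}--\eqref{3.7} that any such $C$ must have $c_1=c_2=0$ is also correct (and the $\xi_\alpha$-components vanish for free, since \eqref{3.8}--\eqref{3.10} contain no $\xi$-terms). But from that point on the proposal stops being a proof. Writing $C=aY+b\hat Y+\sum_a c_a\eta_a$ and imposing $\d C=0$ yields, besides the evolution equations for $(a,b,c_a)$ coming from the $Y$-, $\hat Y$- and $\eta_a$-components, two \emph{algebraic} constraints from the $\eta_1,\eta_2$-components, namely $a\o_1+b\hat\o_1+\sum_a c_a\Omega_{a1}=0$ and $a\o_2+b\hat\o_2+\sum_a c_a\Omega_{a2}=0$. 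The entire content of the conjecture is that this overdetermined system is compatible when $\d\o=0$. Your handling of this step --- ``the obstruction to a parallel section collapses to a curvature identity that $\d\o=0$ is designed to supply,'' ``I expect this to be the main obstacle'' --- is a statement of hope, not an argument: closedness of $\o$ only absorbs the $\o$-terms in the $Y,\hat Y$-equations after the gauge $Y\mapsto e^{-\tau}Y$, $\hat Y\mapsto e^{\tau}\hat Y$, and by itself says nothing about the terms involving $\Omega_{ak}$, $\hat F$, $\hat G$ and the invariants $L_a,S_\alpha,T_\alpha$, whose conspiracy is exactly the hard curvature computation that has been carried out only for $k=3$. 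A secondary overstatement: irreducibility gives non-integrability of $\mathbb{D}_2$, hence $L\not\equiv 0$, not ``$L$ nowhere zero''; at best you may work on the open dense set where $L\ne 0$. In short, your proposal sets up the correct framework and correctly executes its trivial part, but the essential step --- proving existence of the parallel vector $C$, i.e.\ the integrability of the linear system --- is left unproven, and that step \emph{is} the open conjecture.
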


A main difficulty in proving these two conjectures for arbitrary dimension $k$ is that when $k$ changes we have to modify the frame $\{Y,\hat{Y},\eta_1,\eta_2,\eta_a\}$ as well as the expression $\o$ accordingly, and a unified treatment is still lacking.

Finally, we mention that under the condition of being M\"obius homogeneous, Wintgen ideal submanifolds have been classified \cite{LiTZ3}. Not surprisingly they come from famous examples of homogeneous minimal surfaces.

\begin{theorem}\label{thm-homog}\cite{LiTZ3}
A M\"{o}bius homogeneous Wintgen ideal submanifold is M\"{o}bius equivalent to either an affine subspace in $\mathbb{R}^{m+p}$, or a cone over a Veronese surface in $S^{2k}$,
or a cone over a Clifford type flat minimal surface in $S^{2k+1}$, or a cone over $\pi^{-1}\circ f: \mathbb{C}P^1\to S^{2k+1}$ where $f: \mathbb{C}P^1\to\mathbb{C}P^k$ is the veronese mapping and $\pi$ is the Hopf bundle projection.
\end{theorem}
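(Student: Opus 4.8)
The plan is to leverage the two structural theorems of this paper to reduce the classification to a statement about homogeneous surfaces. First I would observe that M\"obius homogeneity forces every M\"obius invariant to be constant on $M^m$: a subgroup $G\subset O(m+p+1,1)$ acting transitively must preserve the complete system of invariants $\{\mathrm{g},\mathbf{B},\theta_{\alpha\beta}\}$ singled out at the end of Section~2, so the structure functions $L,U,V,S_\alpha,T_\alpha$ of Proposition~\ref{prop-connection} together with the Blaschke tensor $\mathbf{A}$ all reduce to constants. Fed into the structure equations of Section~3, this rigidifies the geometry almost completely, leaving only finitely many discrete possibilities to sort out; in particular the canonical distribution $\mathbb{D}_2$ generates an integrable distribution $\mathbb{D}$ of \emph{constant} dimension $k$.

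Next I would pass to the fiber bundle $\S^{m-2}\to\widehat{M}^m\to\overline{M}^2$ of Theorem~\ref{thm-envelop}. Homogeneity of $\widehat{M}^m$ descends to make $\overline{M}^2$ a homogeneous Riemann surface and the conformal Gauss map $\Xi:\overline{M}^2\to\mathrm{Gr}(p,\R^{m+p+2}_1)$ an equivariant super-conformal harmonic (super-minimal) immersion, by Theorem~\ref{thm-harmonic}. The heart of the argument is then the classification of such equivariant super-minimal surfaces: rewriting the constant-coefficient structure equations as the harmonic-map equations for a map of a homogeneous surface into a symmetric space, one recognizes the associated harmonic sequence as that of an irreducible $SU(2)$-representation. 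This pins the surface datum down to a Veronese-type immersion, and correspondingly identifies the only possible ``complex curve'' building blocks as the Veronese surface in $\S^{2k}$, the Clifford flat minimal surface in $\S^{2k+1}$, and the Hopf lift of the Veronese curve $f:\mathbb{C}P^1\to\mathbb{C}P^k$.

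Finally I would reconstruct the ambient submanifold from this surface datum. Since $\dim\mathbb{D}=k$ is constant, the reduction technique of \cite{LiTZ1} (available in the relevant range) exhibits $M^m$ as a cone, cylinder, or rotational submanifold over a $k$-dimensional \emph{minimal} homogeneous Wintgen ideal submanifold, the three cases being separated by the behaviour of the invariant one-form $\o=\d Y\cdot\hat{Y}$ and, equivalently, of the Lorentz plane $\mathrm{Span}\{Y,\hat{Y}\}$: whether it collapses onto a fixed point, a fixed null direction, or a fixed spacelike sphere. The fully degenerate case produces the affine subspace in $\R^{m+p}$. The main obstacle is the middle step, namely proving that an equivariant super-conformal harmonic map into the real Grassmannian must be one of the listed homogeneous models; this forces one into the representation-theoretic classification of the harmonic sequence and demands care in separating the totally real (Veronese and Clifford) cases from the complex (Hopf) case according to the isotropy order and rank of $\Xi$.
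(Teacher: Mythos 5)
You should first be aware that this paper does not actually prove Theorem~\ref{thm-homog}: it is quoted as a survey result from \cite{LiTZ3}, and the only indication of its proof is the remark following the statement, namely that the two key steps in \cite{LiTZ3} are (i) showing that the M\"obius form of a M\"obius homogeneous Wintgen ideal submanifold must \emph{vanish}, and (ii) showing that $M$ can always be reduced, in the sense of Conjecture~\ref{conj-reduce}, to a $2$- or $3$-dimensional \emph{minimal} example, after which one appeals to the known list of homogeneous minimal surfaces. Your outline is broadly compatible with this scheme (reduce to a minimal building block, then identify the homogeneous surface models), but as a proof it has genuine gaps, and they sit exactly at the load-bearing points.

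First, your opening claim that homogeneity forces $L,U,V,S_\alpha,T_\alpha$ and $\mathbf{A}$ to be constant is both too quick and not the statement that is needed. These functions are defined only up to the residual frame freedom \eqref{transform}, so only suitable combinations of them are honest invariants; more importantly, what the classification requires is not constancy but \emph{vanishing} of the M\"obius form, i.e.\ $U=V=0$ (the components $C^1_a, C^2_a, C^\alpha_i$ vanish automatically by \eqref{Cai}). Constancy does not imply vanishing, and nothing in your argument produces it, yet the authors single it out as a key step. Second, the step you yourself call the heart of the argument --- that an equivariant super-conformal harmonic map $\Xi:\overline{M}^2\to\mathrm{Gr}(p,\R^{m+p+2}_1)$ must be a Veronese, Clifford, or Hopf model --- is named but not proved; invoking ``the harmonic sequence of an irreducible $SU(2)$-representation'' presupposes an equivariance and irreducibility analysis in a \emph{pseudo-Riemannian} Grassmannian that you never set up, and the classical harmonic-sequence machinery for maps into spheres or $\mathbb{C}P^n$ does not transfer for free. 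Third, you invoke Conjecture~\ref{conj-reduce} ``in the relevant range,'' but it is stated in this paper as a conjecture, proved only for $k=2$ in \cite{LiTZ1} (with $k=3,4,5$ unpublished); to legitimately use it you must first show that the homogeneous case reduces to $k=2$ or $3$, which is itself the other key step of \cite{LiTZ3}, not something you may assume. In short, your proposal is a plausible road map, but every hard step on it is left as a black box, including the two that the paper identifies as the actual content of the proof.
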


It is interesting to note that for a M\"obius homogeneous Wintgen ideal submanifold $M$, the M\"obius form must vanish, and $M$ can always be reduced to 2 or 3 dimensional minimal examples in the sense of Conjecture~\ref{conj-reduce}. Proving these facts are the key steps in obtaining the final classification in \cite{LiTZ3}.

\end{document}